\newcommand{\N}{\mathbb{N}}
\newcommand{\R}{\mathbb{R}}
\newcommand{\C}{\mathbb{C}}
\newcommand{\rex}{\rho_\varepsilon^X}
\newcommand{\rey}{\rho_\varepsilon^Y}
\newcommand{\eps}{\varepsilon}
\newcommand{\de}{d_\varepsilon}
\DeclareMathOperator{\dist}{dist}
\newtheorem{thm}{Theorem}[section]
\newtheorem{lemma}[thm]{Lemma}
\theoremstyle{definition}
\newtheorem{defn}[thm]{Definition}
\newtheorem{rmk}[thm]{Remark}
\title{Rough isometry between Gromov hyperbolic spaces and uniformization}
\author{Jeff Lindquist and Nageswari Shanmugalingam
\footnote{The second author was partially supported by grant DMS~\#1800161 from NSF (U.S.A.) Part of the work for
this project was done while the second author was visiting IMPAN; she thanks that institution and the Simons 
Foundation grant 346300 to IMPAN and the matching 2015-2019 Polish MNiSW fund for their kind hospitality.
The authors also thank the referees for suggestions that helped improve the exposition.}}
\begin{document}
\maketitle

\begin{abstract}
In this note we show that given two complete geodesic
Gromov hyperbolic spaces that are roughly isometric and an arbitrary
$\eps>0$ (not necessarily small), 
either the uniformization of both spaces with parameter $\eps$ results in uniform
domains, or else neither uniformized space is a uniform domain. The terminology of 
``uniformization" is from~\cite{BHK}, where it is shown that the uniformization,
with parameter $\eps>0$, of a complete
geodesic Gromov hyperbolic space results in a uniform domain provided $\eps$ is small enough.
\end{abstract}

\section{Introduction}

Uniform domains play a special role in the study of planar quasiconformal mappings (see
for example~\cite{MarSar} where the concept of uniform domains was first introduced, \cite{Mar, GeO,
BKR, H, KL}) 
and in potential theory (see for example~\cite{KP, KT, LLMS, A1, A2, HK, BSh}). 
The notion of uniform domains does not require the underlying space to
be Euclidean or smooth, and so has a natural extension to general
metric spaces, see Definition~\ref{def:uniform} below. On the other hand, the notion of curvature, as
defined in Riemannian geometry, is a second order calculus notion and so does not easily lend itself
to the setting of more general metric spaces. Instead, in that non-smooth setting, the role of 
negative curvature is played by two possible alternatives, Alexandrov curvature and Gromov
hyperbolicity, see the discussions in~\cite{BH, BuSch, CDP, GH}.
Gromov hyperbolic spaces were first defined in~\cite{Gr} in the 
context of studying hyperbolic groups. 

The work \cite{BHK} demonstrates a strong connection between Gromov hyperbolic spaces and 
uniform domains. It was shown there that uniform domains in metric spaces, equipped with the
quasihyperbolic metric $k$ (see~\eqref{eq:quasihyp-defn}) are necessarily  Gromov hyperbolic spaces.
Conversely, given a geodesic 
Gromov hyperbolic space $X$, there is a positive number $\eps_0$ such that whenever
$0<\eps\le \eps_0$, the uniformization $X_\eps$ of $X$ corresponding to the parameter $\eps$ is
a uniform domain.

It is not difficult to see that if $X$ and $Y$ are two complete geodesic spaces with $Y$ a Gromov 
hyperbolic space, and if there is a rough isometry $\Phi:Y\to X$ as in Definition~\ref{def:rough-isom},
then $X$ is also Gromov hyperbolic; that is, Gromov hyperbolicity is a large scale property and is
not destroyed by small-scale perturbations. Therefore it is natural to ask whether the allowable range
of uniformization parameters is preserved by rough isometries. This is the goal of this current
note. In particular, we show that if $X$ and $Y$ are Gromov hyperbolic and $\Phi:Y\to X$ is a
rough isometry, and if $\eps>0$ is such that $X_\eps$ is a uniform domain, then $Y_\eps$ is also
a uniform domain, see Theorem~\ref{thm:main}.
In~\cite{BBS} it was shown that if a Gromov hyperbolic space $X$ is uniformized to a uniform
domain $X_\eps$ (for sufficiently small $\eps>0$), and the subsequent boundary
$Z:=\partial X_\eps$ has a hyperbolic filling $Y$ with appropriate scaling parameters,
then $Y$ is roughly isometric to $X$. It follows from our results then that $Y_\eps$ is also
a uniform domain (since we know that $X_\eps$ is). It is not difficult to see that
$\partial Y_\eps$ is isometric to $\partial X_\eps$, and hence our result 
ties the potential theoretic
properties of $\partial X_\eps$ to those of $Y_\eps$, even though $X_\eps$ itself could be
ill-connected from the point of view of potential theory. It was shown in~\cite{BBS} that
if $Y_\eps$ is a uniform domain, then
$Y_\eps$ has a suitable measure with respect to which $Y_\eps$ is doubling and supports a 
$1$-Poincar\'e inequality. 

If $Z$ is a doubling metric measure space supporting a $p$-Poincar\'e inequality, then the
correct setting for potential theory on $Z$ is the so-called Newton-Sobolev class of functions,
see for example~\cite{BBbook}. When $Z$ does not support such a Poincar\'e inequality, 
for example if $Z$ does not have sufficiently many rectifiable curves, then the Newton-Sobolev
class is the wrong class for potential theory on $Z$; in this case, the more appropriate 
function class is a Besov class of functions on $Z$. 
See for example~\cite{GKS, BBS, BBS2} for more on Besov classes.
There are Gromov hyperbolic spaces $X$
for which $X_\eps$ is a uniform domain but $\partial X_\eps$ may not even be connected; hence
the potential theory on $\partial X_\eps$ should be via Besov classes. As Besov energies are
non-local, their properties are not well understood. For example, what metric properties of
subsets of $\partial X_\eps$ guarantee that the Besov capacity of the set is null? 
Should $Y$ be another Gromov hyperbolic metric space equipped with a uniformly locally
doubling measure supporting a uniformly local Poincar\'e inequality, and $X$ is roughly isometric
to $Y$, then we know from our main theorem, Theorem~\ref{thm:main}, that $Y_\eps$ is also
a uniform space. It then follows from the results in~\cite{BBS2} that the induced
measure on $Y_\eps$ is doubling and supports a Poincar\'e inequality; the results of~\cite{BBS2}
show that the trace of the Newton-Sobolev class of functions on $Y_\eps$ \emph{is} a Besov
class on $\partial Y_\eps$, and that a subset of $\partial Y_\eps$ is null for this Besov class
if and only if it is null for the Newton-Sobolev class. Null sets for Newton-Sobolev classes
are reasonably well understood, and this understanding translates to a reasonable understanding
of Besov-null sets in $\partial Y_\eps$. Since $X$ is roughly isometric to $Y$, we must have
that $\partial X_\eps$ is biLipschitz equivalent to $\partial Y_\eps$, and so such an
understanding can be imported to $\partial X_\eps$ as well. See~\cite{BoPa} for connections between
Besov classes on $\partial X_\eps$ and the algebraic structure of $X$ for certain geometric classes of
hyperbolic spaces $X$.

Observe that by the results of~\cite{BHK}, $Y_\eps$ is a 
uniform domain if $\eps$ is small enough, but here we do not require smallness of $\eps$.
The key reason in~\cite{BHK} for requiring $\eps$ be sufficiently small is that for small
enough $\eps$ a Gehring-Hayman property holds for hyperbolic geodesics. Since we do not
assume $\eps$ to be small, we cannot rely on this property; instead, our proof uses
the technique of discretization of paths.

The next section is devoted to providing the relevant definitions. The
first part of the third section develops the tools necessary to prove our main theorem,
Theorem~\ref{thm:main}, and the proof of that theorem is given in the last part of that section.
The last section is devoted to showing that replacing rough isometries with rough quasiisometries
makes the conclusion of Theorem~\ref{thm:main} false.

We adopt the convention that $Q_1\gtrsim Q_2$ if there is a constant $C>0$ such that
$C\, Q_1\ge Q_2$. We say that $Q_1\lesssim Q_2$ if $Q_2\gtrsim Q_1$, and we say that $Q_1\simeq Q_2$ if 
$Q_1\gtrsim Q_2$ and $Q_1\lesssim Q_2$. We say that $Q_1\simeq Q_2$ with comparison constant $C>0$
if 
\[
\frac{1}{C}\, Q_1\le Q_2\le C\, Q_1.
\]

\section{Background}

In this paper we only consider complete, locally compact, geodesic metric spaces.
We provide the relevant definitions of the notions used in this note. In what follows,
given a metric space $(Z,d)$, $z\in Z$ and $r>0$, we set 
$B(z,r):=\{x\in Z\, :\, d(x,z)<r\}$ and $\overline{B}(z,r):=\{x\in Z\, :\, d(x,z)\le r\}$.

\begin{defn}\label{def:GromovHyp}
A complete unbounded locally compact
geodesic metric space $(Z,d)$ is said to be \emph{Gromov hyperbolic} if there exists 
$\delta\ge 0$ such that whenever $x,y,z\in Z$ and $[x,y], [y,z], [z,x]$ are geodesic paths in $Z$
with end points $x,y$, end points $y,z$, and end points $z,x$, respectively, then
\[
[x,y]\subset \bigcup_{w\in[y,z]\cup[z,x]}\overline{B}(w,\delta).
\]
Here, if $\delta=0$, we interpret $\overline{B}(w,\delta)$ to be the set $\{w\}$.
\end{defn}

The above definition of Gromov hyperbolic space is from~\cite{BHK}, but readers might want to
keep in mind that there are alternate definitions of Gromov hyperbolic spaces in literature
that do not require the metric space to be geodesic or locally compact, see for example
\cite[Definition~1.2.2, Proposition~2.1.2, Proposition~2.1.3]{BuSch}.

\begin{defn}\label{def:RoughStarlike}
We say that a Gromov hyperbolic space $(Z,d)$ is \emph{roughly starlike} (or, $M$-roughly starlike)
if there exists $M\ge 0$ and $z_0\in Z$ such that
for all $z\in Z$ there is a geodesic ray $\gamma:[0,\infty)\to Z$ with $\gamma(0)=z_0$ 
and $t_0\in[0,\infty)$ such that $d(z,\gamma(t_0))\le M$.
\end{defn}

Trees with each vertex of degree at least $2$
are Gromov hyperbolic with $\delta=0$ and are roughly starlike with $M=0$. Uniform domains,
equipped with the quasihyperbolic metric, are necessarily Gromov hyperbolic and roughly starlike, 
see the discussion in~\cite[Chapter~3]{BHK}. The example 
\[
X=[0,\infty)\cup\bigcup_{n\in\N}[n,n+\sqrt{-1}\, n]\subset\C,
\]
equipped with the length metric induced by the Euclidean metric shows that there are
Gromov hyperbolic spaces that are not roughly starlike.

Following~\cite{BHK}, for each $\eps>0$ we consider uniformization of Gromov hyperbolic spaces
with parameter $\eps$.

\begin{defn}\label{def:uniformization}
Let $(Z,d)$ be a Gromov hyperbolic space, $z_0\in Z$, and $\eps>0$. We consider the ``density" function
$\rho_\eps^Z:Z\to(0,1]$ given by 
\[
\rho_\eps^Z(z):=e^{-\eps d(z,z_0)}.
\]
This density function induces a metric on $Z$, given by
\[
d_\eps(z_1,z_2)=\inf_\gamma\int_\gamma\rho_\eps^Z\, ds,
\]
for $z_1,z_2\in Z$, where the infimum is over all rectifiable paths $\gamma$ in $Z$ with end points
$z_1$ and $z_2$ and the integral is taken with respect to the arc-length measure $ds$. 
We denote this induced metric space $(Z,d_\eps)$ by $Z_\eps$. Furthermore, for a locally
rectifiable path $\gamma$ in 
$Z$, we denote by $\ell(\gamma)$ its length with respect to the metric $d$,
and by $\ell_\eps(\gamma)$ its length with respect to the metric $d_\eps$. It is not difficult to
see that
\[
\ell_\eps(\gamma)=\int_\gamma \rho_\eps^Z\, ds.
\]
\end{defn}

The above construction of uniformization is from~\cite[Chapter~4]{BHK}. 
As mentioned above, from~\cite{BHK} we know that if $Z$ is Gromov
hyperbolic and $\eps\le \eps_0=\eps_0(\delta)$, then $Z_\eps$ is a uniform domain, that is, it satisfies
the following definition.

\begin{defn}\label{def:uniform}
Let $Z$ be a locally complete, non-complete metric space, and set $\partial Z:=\overline{Z}\setminus Z$. We say that
$Z$ is a \emph{uniform domain} (or a uniform space) if there is a constant $\lambda\ge 1$ such that for each pair of points
$x,y\in Z$ there is a rectifiable curve $\gamma$ in $Z$ with end points $x$ and $y$ satisfying
\begin{enumerate}
\item $\ell(\gamma)\le \lambda\, d(x,y)$, that is, $\gamma$ is $\lambda$-quasiconvex,
\item for each $z\in\gamma$,
\[
 \delta_Z(z):=\text{dist}(z,\partial Z)\ge \lambda^{-1}\min\{\ell(\gamma(x,z)),\ell(\gamma(z,y))\}.
\]
\end{enumerate}
Here $\gamma(x,z)$ is any of the subcurves of $\gamma$ with end points $x,z$.
The number $\lambda$ is called a uniformity constant of $Z$, and a curve $\gamma$ satisfying
the two listed conditions above is said to be a uniform curve or a $\lambda$-uniform curve.
\end{defn}

From~\cite{MarSar, GeO, BHK}, there is a natural deformation
of the metric on a uniform domain $(Z,d)$, called the quasihyperbolic metric.

\begin{defn}\label{def:quasihyperbolic}
Given a locally compact, non-complete metric space $(Z,d)$, the \emph{quasihyperbolic metric} $k$ on $Z$
is given by
\begin{equation}\label{eq:quasihyp-defn}
k(x,y)=\inf_\gamma\int_\gamma\frac{1}{\delta_Z(\gamma(t))}\, ds(t)
\end{equation}
when $x,y\in Z$. Here the infimum is over all rectifiable curves $\gamma$ in $Z$ with end points
$x$ and $y$, and $ds$ is the arc-length measure.
\end{defn}

We assume from now on that $(X, d)$ and $(Y, d)$ are Gromov hyperbolic spaces.

\begin{defn}\label{def:rough-isom}
A map $\Phi \colon Y \to X$ is a \emph{$\tau$-rough isometry} if
\[
d(y_1,y_2) - \tau \leq d(\Phi(y_1),\Phi(y_2)) \leq d(y_1,y_2) + \tau
\]
for all $y_1,y_2 \in Y$ 
and $\Phi(Y)$ is \emph{$\tau$-dense} in $X$, that is,  for each $x\in X$ there is some $y_x\in Y$ such that
$d(x,\Phi(y_x))\le \tau$. 
\end{defn}

Note that we do not require $\Phi$ to be continuous, and we do not require it to be injective or surjective.

\begin{lemma}\label{lem:inverse-rough-isom}
Given a $\tau$-rough isometry $\Phi:Y\to X$, there exists 
a $3\tau$-rough isometry $\Phi^{-1}:X\to Y$ such that
for all $y\in Y$ and $x\in X$ we have
\[
d(y,\Phi^{-1}(\Phi(y)))\le 2\tau, \qquad d(x,\Phi(\Phi^{-1}(x)))\le \tau.
\]
\end{lemma}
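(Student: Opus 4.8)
The plan is to build $\Phi^{-1}$ directly from the $\tau$-density of $\Phi(Y)$ in $X$ and then verify each of the three claimed properties by elementary triangle-inequality estimates. First I would fix, for every $x\in X$, a point $y_x\in Y$ with $d(x,\Phi(y_x))\le\tau$; such a point exists precisely because $\Phi(Y)$ is $\tau$-dense in $X$. This selection need not be canonical, so the construction implicitly uses a choice function, but no continuity, injectivity, or uniqueness is needed. I then define $\Phi^{-1}\colon X\to Y$ by $\Phi^{-1}(x):=y_x$. The identity $d(x,\Phi(\Phi^{-1}(x)))=d(x,\Phi(y_x))\le\tau$ is then immediate from the defining property of $y_x$, so the second of the two displayed closeness estimates comes for free.

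For the first closeness estimate I would fix $y\in Y$ and set $x:=\Phi(y)$. By construction $d(\Phi(y),\Phi(\Phi^{-1}(x)))=d(x,\Phi(y_x))\le\tau$, and feeding the pair $y,\,y_x=\Phi^{-1}(\Phi(y))$ into the lower bound of the $\tau$-rough isometry inequality for $\Phi$ yields $d(y,\Phi^{-1}(\Phi(y)))-\tau\le d(\Phi(y),\Phi(\Phi^{-1}(\Phi(y))))\le\tau$, hence $d(y,\Phi^{-1}(\Phi(y)))\le2\tau$, as required.

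It remains to show $\Phi^{-1}$ is a $3\tau$-rough isometry. Here I would fix $x_1,x_2\in X$, write $y_i:=\Phi^{-1}(x_i)$, and repeatedly use $d(x_i,\Phi(y_i))\le\tau$ together with the triangle inequality to sandwich $d(\Phi(y_1),\Phi(y_2))$ between $d(x_1,x_2)-2\tau$ and $d(x_1,x_2)+2\tau$. Combining each of these with the corresponding bound of the $\tau$-rough isometry inequality for $\Phi$, which compares $d(\Phi(y_1),\Phi(y_2))$ with $d(y_1,y_2)$ up to an additional $\tau$, then gives $d(x_1,x_2)-3\tau\le d(\Phi^{-1}(x_1),\Phi^{-1}(x_2))\le d(x_1,x_2)+3\tau$. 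Finally, $\Phi^{-1}(X)$ is $3\tau$-dense in $Y$ because for any $y\in Y$ the already-established inequality $d(y,\Phi^{-1}(\Phi(y)))\le2\tau\le3\tau$ exhibits $\Phi^{-1}(\Phi(y))\in\Phi^{-1}(X)$ within distance $3\tau$ of $y$.

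The argument is short, and the only subtlety is bookkeeping of the additive defects; the ``obstacle,'' such as it is, is simply to keep track of how many applications of the triangle inequality and of the rough-isometry defect accumulate, so that the constants $2\tau$ and $3\tau$ come out exactly as stated rather than larger. No use is made of geodesy, Gromov hyperbolicity, or local compactness, so the lemma is a purely metric fact about rough isometries.
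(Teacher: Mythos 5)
Your proposal is correct and follows essentially the same route as the paper: the same choice-function construction of $\Phi^{-1}$ from $\tau$-density, the same derivation of the two closeness estimates, and the same triangle-inequality bookkeeping yielding the $3\tau$ defect and $3\tau$-density. No gaps.
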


This seems to be well-known (see for example~\cite{BS}, \cite[Proposition~3.2]{Bow}), 
but as we were not able to find 
a published proof of this fact, we provide the proof here for the convenience of the reader.

\begin{proof}
We first construct $\Phi^{-1}:X\to Y$ as follows. Given $x\in X$, by the fact that every point in
$X$ is within a distance $\tau$ of $\Phi(Y)$, we can find a point $y_x\in Y$ such that
$d(\Phi(y_x),x)\le \tau$. We choose one such $y_x$ and set $\Phi^{-1}(x)=y_x$. 
Note that 
\[
d(\Phi(\Phi^{-1}(x)),x)=d(\Phi(y_x),x)\le \tau.
\] 
Moreover, for $y\in Y$, with the choice of $x=\Phi(y)$, we have the point 
$y_{\Phi(y)}$ as a point in $Y$ that $\Phi^{-1}$ maps $x$ to. Then
$d(\Phi(y_{\Phi(y)}),x)\le \tau$, and so
\[
d(\Phi^{-1}(\Phi(y)),y)=d(y_{\Phi(y)},y)\le \tau+d(\Phi(y_{\Phi(y)}),\Phi(y))\le 2\tau.
\]
For $x,x'\in X$, we have
\begin{align*}
 d(\Phi^{-1}(x),\Phi^{-1}(x'))=d(y_x,y_{x'})&\le \tau+d(\Phi(y_x),\Phi(y_{x'}))\\
  &\le \tau+d(\Phi(y_x),x)+d(x,x')+d(x',\Phi(y_{x'}))\\
  &\le 3\tau+d(x,x').
\end{align*}
Furthermore,
\begin{align*}
 d(\Phi^{-1}(x),\Phi^{-1}(x'))=d(y_x,y_{x'})&\ge d(\Phi(y_x),\Phi(y_{x'}))-\tau\\
  &\ge -d(\Phi(y_x),x)+d(x,x')-d(x',\Phi(y_{x'}))-\tau\\
  &\ge d(x,x')-3\tau.
\end{align*}
Finally, given $y\in Y$, we set $x=\Phi(y)$ and note from the first part of the argument that
\[
d(y,\Phi^{-1}(x))=d(y,\Phi^{-1}(\Phi(y)))\le 2\tau.
\]
This concludes the  proof.
\end{proof}

\begin{rmk}\label{rmk:roughed up}
Note that if $\Phi$ is a $\tau$-rough isometry, then it is also a $3\tau$-rough isometry. Hence,
by replacing $\tau$ with $3\tau$ if necessary, we will assume in the rest of the paper that
both $\Phi$ and $\Phi^{-1}$ are $\tau$-rough isometries with
\[
d(y,\Phi^{-1}(\Phi(y)))\le \tau \quad \text{ and } \quad d(x,\Phi(\Phi^{-1}(x)))\le \tau.
\]
\end{rmk}

\begin{rmk}
Suppose that $X$ and $Y$ are two geodesic
metric spaces and $\Phi:Y\to X$ is a $\tau$-rough isometry.
From~\cite[Proposition~1.22]{BH} we know that if $X$ is $\delta$-Gromov hyperbolic, then
the Gromov product $(x\vert y)_{y_0}$, $x,y\in Y$, satisfies the so-called $6\delta$-inequality:
\[
(x\vert y)_{y_0}\ge \min\{(x\vert z)_{y_0},(z\vert y)_{y_0}\}-6\delta,
\]
where the Gromov product is defined by
\[
(x\vert y)_{y_0}:=\frac12[d(x,y_0)+d(y,y_0)-d(x,y)].
\]
Moreover, if $Y$ is a geodesic space
and satisfies the above $6\delta$-inequality, then $Y$ is $36\delta$-Gromov hyperbolic.
From the above it follows immediately that if $Y$ is $\delta$-Gromov hyperbolic, then
$X$ is $6(3\tau+6\delta)$-Gromov hyperbolic. 

If $Y$ is both $\delta$-Gromov hyperbolic and $M$-roughly starlike, then $X$ is 
$M^\prime(\delta,\tau, M)$-roughly starlike. To see this, note that if $x\in X$, then
setting $y=\Phi^{-1}(x)$, by the rough starlikeness of $Y$ there is a geodesic ray
$\gamma:[0,\infty)\to Y$ with $\gamma(0)=y_0$ and some $t_0\ge 0$ such that
$d(\gamma(t_0),y)\le M$. For $k=0,1,\cdots$ let $b_k=\Phi(\gamma(k(1+\tau)))$ 
and $w=\Phi(\gamma(t_0))$.
Then $d(x,w)\le d(y,\gamma(t_0))+\tau\le M+\tau$, and 
$d(w,b_i)\le d(\gamma(t_0),\gamma(i(1+\tau)))+\tau\le 1+2\tau$ for some $i\in\N$. 
By the geodesic stability result~\cite[Theorem~1.3.2 of page~5]{BuSch}, 
together with an invocation of the Arzel\`a-Ascoli theorem there is a 
positive number $h(\tau,\delta)$ and a geodesic
ray $\beta:[0,\infty)\to X$ with $\beta(0)=x_0$ and $s_0\ge 0$ such that 
$d(b_i,\beta(s_0))\le h(\tau,\delta)$. Combining these together, we get
\[
d(x,\beta(s_0))\le M+\tau+1+2\tau+h(\tau,\delta)=1+M+3\tau+h(\tau,\delta),
\]
that is, $X$ is $M^\prime(\delta,\tau, M)$-roughly starlike with
\[
M^\prime(\delta,\tau, M)=1+M+3\tau+h(\tau,\delta).
\]
Interestingly, the geodesic stability property mentioned above also 
characterizes Gromov hyperbolicity, see~\cite{Bo}. The result~\cite[Proposition~3.1]{Bo}
together with the fact that the path in $X$ obtained by concatenating the geodesic
segments connecting $b_k,b_{k+1}$, $k=0,1,\cdots$ is a $(\lambda,\tau\lambda)$-chord-arc
curve in the sense defined in~\cite[Page~295]{Bo} gives a more explicit estimate for
$h(\tau,\delta)$ than that found in~\cite{BuSch}. Here,
\[
\lambda=\frac{1+2\tau}{1+\tau}.
\]
\end{rmk}

There is a more general notion of Gromov hyperbolicity that does not require the space
to be a geodesic space, and this notion is given with respect to the Gromov product;
see for example~\cite{Gr, GH}. Since
what we do with path integrals requires our space to be a geodesic space, we do not
consider the Gromov product definition of hyperbolicity.

\begin{rmk}
The density $\rho_\eps^Z$ as considered in Definition~\ref{def:uniformization} is an example of 
a large class of densities, called \emph{conformal densities}, used to deform metrics on 
a given metric space, see for example~\cite{KL, BKR}. 
A positive continuous function $\rho$ on a metric space $Z$ is
a \emph{Harnack conformal density} if there is a constant $A\ge 1$ such that 
whenever $x, y \in X$ with $d(x,y)\le 1$, we have 
\begin{equation}\label{ratio comp}
\frac{1}{A} \leq \frac{\rho(x)}{\rho(y)} \leq A.
\end{equation}
The nomenclature is justified by the fact that if $\rho$ is a conformal density on $(Z,d)$ and
the metric on $Z$ is modified to a new metric $d_\rho$
according to the scheme given in Definition~\ref{def:uniformization}
with $\rho$ playing the role of $\rho_\eps^Z$, then the natural identity map
$\text{Id}:(Z,d)\to (Z,d_\rho)$ is a (metrically) $1$-quasiconformal map. The usage of ``Harnack"
in the above nomenclature echoes the Harnack property of positive harmonic functions.
\end{rmk}

We fix two distinguished points $x_0\in X$ and $y_0\in Y$.
We are concerned with the two densities 
\[
\rex(x) = e^{-\eps d(x_0, x)} \ \text{ and } \ \rey(y) = e^{-\eps d(y_0, y)}.  
\]
We denote by $X_\eps$ and $Y_\eps$ the 
$\eps$-uniformizations of $X$ and $Y$.

\begin{rmk}\label{mult dist rmk}
Given a conformal density $\rho$ on $Z$ as in~\eqref{ratio comp}, and $Z$ a geodesic space,
we see that whenever $K \in \N$ and $x, y \in X$ such that $d(x,y) \leq K$, then 
\[
\frac{1}{A^K} \leq \frac{\rho(x)}{\rho(y)} \leq A^K.
\]
Note that by the triangle inequality,
\[
\frac{\rex(x)}{\rex(y)}=e^{-\eps[d(x,x_0)-d(y,x_0)]}\ge e^{-\eps d(x,y)}\ge e^{-\eps}
\]
when $d(x,y)\le 1$. Similarly, we get
\[
\frac{\rex(x)}{\rex(y)}=e^{-\eps[d(x,x_0)-d(y,x_0)]}\le e^{\eps d(x,y)}\le e^\eps.
\]
Thus both $\rex$ and $\rey$ satisfy~\eqref{ratio comp} with $A=e^\eps$.
\end{rmk}

As described above, a given roughly starlike Gromov hyperbolic space can be uniformized
and then the resulting space can be equipped with its quasihyperbolic metric 
(see~\eqref{eq:quasihyp-defn} above for the definition of quasihyperbolic metric). The outcome may
not be isometric to the original Gromov hyperbolic space, but as the next lemma shows, it is
biLipschitz equivalent.

\begin{lemma}\label{lem-heartburn}
Let $(X,d)$ be a roughly starlike Gromov hyperbolic space and $\eps>0$.
Then $(X_\eps,k)$ is biLipschitz equivalent to $(X,d)$.
\end{lemma}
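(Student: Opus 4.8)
The plan is to show that the identity map $\mathrm{Id}:(X,d)\to(X_\eps,k)$ is biLipschitz, which amounts to proving $k(x,y)\simeq d(x,y)$ for all $x,y\in X$ with comparison constants depending only on $\eps$ and the starlikeness constant $M$. The entire argument rests on first establishing the pointwise comparison
\[
\frac{1}{\eps}\,\rex(x)\le \delta_{X_\eps}(x)\le C(\eps,M)\,\rex(x),\qquad x\in X,
\]
where $\delta_{X_\eps}(x)=\dist(x,\partial X_\eps)$ is computed in the metric $\de$; once this is in hand the quasihyperbolic integrand becomes comparable to the $d$-arclength integrand and the conclusion follows quickly.

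First I would identify the boundary $\partial X_\eps$. Since $X$ is complete, locally compact and geodesic, it is proper, so any $\de$-Cauchy sequence that fails to be $d$-Cauchy must be $d$-unbounded; hence every point of $\partial X_\eps$ is represented by a sequence $(x_n)$ with $d(x_0,x_n)\to\infty$. Consequently any rectifiable path $\gamma$ from $x$ to a point of $\partial X_\eps$, parametrized by $d$-arclength $s$, has infinite $d$-length and satisfies $d(x_0,\gamma(s))\le d(x_0,x)+s$. The lower bound then drops out of the triangle inequality:
\[
\lep(\gamma)=\int_0^\infty \rex(\gamma(s))\,ds\ge e^{-\eps d(x_0,x)}\int_0^\infty e^{-\eps s}\,ds=\frac{1}{\eps}\,\rex(x),
\]
and taking the infimum over all such $\gamma$ gives $\delta_{X_\eps}(x)\ge \tfrac1\eps\,\rex(x)$. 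Notably this half uses only properness, not hyperbolicity.

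The upper bound is where rough starlikeness enters, and I expect it to be the main technical point. Given $x$, I would use Definition~\ref{def:RoughStarlike} to choose a geodesic ray $\sigma:[0,\infty)\to X$ with $\sigma(0)=x_0$ and a parameter $t_0$ with $d(x,\sigma(t_0))\le M$; along $\sigma$ one has $d(x_0,\sigma(t))=t$, so $\sigma$ escapes to a point of $\partial X_\eps$. Concatenating a $d$-geodesic from $x$ to $\sigma(t_0)$ (of $d$-length at most $M$, along which $\rex\le e^{\eps M}\rex(x)$) with the tail $\sigma|_{[t_0,\infty)}$ and estimating $\lep$ of each piece yields $\delta_{X_\eps}(x)\le e^{\eps M}\bigl(M+\tfrac1\eps\bigr)\rex(x)$, the desired bound.

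Finally I would assemble the two estimates. Writing the quasihyperbolic length with respect to the $\de$-arclength $ds_\eps=\rex\,ds$, the comparison gives, for every path $\gamma$,
\[
\frac{1}{C(\eps,M)}\,\ell(\gamma)\le \int_\gamma\frac{1}{\delta_{X_\eps}}\,ds_\eps=\int_\gamma\frac{\rex}{\delta_{X_\eps}}\,ds\le \eps\,\ell(\gamma),
\]
where $\ell$ denotes $d$-length. Taking the infimum over $\gamma$ and using that $X$ is geodesic (so that $\inf_\gamma\ell(\gamma)=d(x,y)$, with the infimum attained by a $d$-geodesic), the left estimate yields $k(x,y)\ge C(\eps,M)^{-1}d(x,y)$ while testing against the $d$-geodesic yields $k(x,y)\le \eps\, d(x,y)$. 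This is exactly the asserted biLipschitz equivalence. The one point requiring care is that the upper bound on $\delta_{X_\eps}$ genuinely uses the existence of an escaping geodesic ray near $x$, which is precisely what rough starlikeness supplies; without it the uniformized space could have boundary points close to $x$ and the comparison would fail.
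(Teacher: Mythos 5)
Your proposal is correct and follows essentially the same route as the paper: the key comparison $\delta_{X_\eps}(x)\simeq \rho_\eps^X(x)$ (lower bound via the triangle inequality along escaping paths, upper bound via rough starlikeness and a concatenated geodesic ray) is exactly the paper's Lemma~\ref{lem:dist-eps-bdy}, which you reprove inline, and the final step comparing the quasihyperbolic integrand $\rho_\eps^X/\delta_{X_\eps}$ to the $d$-arclength integrand is the paper's own conclusion of Lemma~\ref{lem-heartburn}.
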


In the above lemma, $k$ is the quasihyperbolic metric with respect to the uniformized space $X_\eps$. Note that
we do not assume any condition on $\eps$ apart from that it is positive. The above lemma was proved
in~\cite[Proposition~4.37]{BHK} for the setting where $\eps\le \eps_0$. For the convenience of the 
reader, we provide a short proof of Lemma~\ref{lem-heartburn} here.

\begin{proof}
Let $\delta_\eps$ be the distance to the boundary $\partial X_\eps:=\overline{X_\eps}\setminus X_\eps$.
Recall that we have a distinguished point $x_0\in X$ in the definition of $X_\eps$.
Note that the quasihyperbolic distance $k$ is given by
\begin{equation*}
k(x,y)=\inf_\gamma\int_\gamma\frac{1}{\delta_\eps(\gamma(t))}\,ds_\eps(t),
\end{equation*}
where we took $\gamma$ to be arc-length parametrized with respect to the metric $d$ on $X$ with end points $x$ and $y$,
and $ds_\eps$ is the arc-length measure with arc-lengths computed with
respect to the uniformized metric $\de$.
By the construction of uniformization, we have that $ds_\eps(z)=e^{-\eps d(z,x_0)}\, ds$. On the other hand, by a straightforward calculation (see
Lemma~\ref{lem:dist-eps-bdy} below),
we know that $\delta_\eps(z)\simeq e^{-\eps d(z,x_0)}$. It follows that
\[
k(x,y)\simeq \inf_\gamma \ell(\gamma)=d(x,y). \qedhere
\]
\end{proof}

\begin{rmk}
The flip side of the above lemma is the following question. Suppose that $Z$ is a uniform space and
$X=(Z,k)$ the metric space obtained by considering the quasihyperbolic metric on $Z$. From~\cite{BHK} we
know that $X$ is then Gromov hyperbolic. Is there a choice of $\eps>0$ such that
$X_\eps$ is biLipschitz equivalent to $Z$? We do not know at this time whether such a choice of 
$\eps$ always exists. The difficulty underlying this question stems from the problem that 
the uniformization of two Gromov hyperbolic spaces that are biLipschitz equivalent
need not result in two biLipschitz equivalent metric spaces; uniformization is a more complex process
than quasihyperbolization. 

On the other hand, from~\cite[Corollary~1]{GeO}, 
$\Omega$ is a uniform domain if and only if the quasihyperbolic metric $k$ is equivalent
to the metric given by
\[
 j(x,y):=\log\left(1+\frac{d(x,y)}{\delta_Z(x)\wedge\delta_Z(y)}\right).
\]
Using the metric $j$ rather than $k$ to perform the uniformization procedure does result
in biLipschitz equivalence.
\end{rmk}

\section{Results}

Recall that $X$ and $Y$ are Gromov hyperbolic spaces and $\Phi:Y\to X$ is a $\tau$-rough 
isometry with a $\tau$-rough isometric inverse (in the sense of 
Remark~\ref{rmk:roughed up})
such that $\Phi(y_0)=x_0$ with $x_0\in X$ and $y_0\in Y$.
In what follows, all curves are assumed to be parametrized by (hyperbolic) arclength unless otherwise specified.

\begin{lemma}\label{discrete paths}
Suppose that $\rho \colon Y \to (0, \infty)$ 
satisfies the Harnack condition~\eqref{ratio comp} with constant $A$.
Let $L > 1$ and $\gamma \colon [0,L] \to Y$ be a curve with $\ell(\gamma)=L$.
Choose $N \in \N$ such that $N \leq L < N+1$. Then
\begin{equation}\label{eq:1}
\int_\gamma \rho ds \simeq \sum_{i=0}^{N-1} \rho(a_i),
\end{equation}
where $a_i=\gamma(iq)$ with $q:=\tfrac{L}{N}$. The comparison constant in~\eqref{eq:1}
can be taken to be $2A^2$.

If $L \leq Q$ with $Q\ge 1$ a fixed number,
we instead have $\int_\gamma \rho ds \simeq L \cdot \rho(\gamma(0))$ with comparison constant 
$A^{Q+1}$.
\end{lemma}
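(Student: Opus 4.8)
The plan is to establish the two comparisons separately, in each case estimating the path integral $\int_\gamma \rho\, ds$ by exploiting the Harnack condition~\eqref{ratio comp} to control the oscillation of $\rho$ along $\gamma$. The underlying idea is that on any subarc of hyperbolic length at most $1$, the density $\rho$ varies by at most a factor of $A$, so $\rho$ is essentially constant at the scale of the discretization spacing $q$ (which is less than $1$ since $N \le L < N+1$ forces $q = L/N < (N+1)/N \le 2$, and in fact a comparison at scale $q$ is what we need).

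For the first comparison, I would partition $[0,L]$ into the $N$ subintervals $[iq,(i+1)q]$ for $i=0,\dots,N-1$, each of hyperbolic length $q$. On the $i$-th subinterval, every point $\gamma(t)$ satisfies $d(\gamma(t),a_i)\le q < 2$, so by applying~\eqref{ratio comp} twice (i.e. the remark that $\rho$ changes by at most $A^{\lceil d\rceil}$ over distance $d$, giving a factor $A^2$ here since $q<2$) we get $A^{-2}\rho(a_i)\le \rho(\gamma(t))\le A^2\rho(a_i)$. Integrating over the subinterval gives $\int_{iq}^{(i+1)q}\rho(\gamma(t))\,dt \simeq q\,\rho(a_i)$ with constant $A^2$. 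Summing over $i$ yields $\int_\gamma \rho\, ds \simeq q\sum_{i=0}^{N-1}\rho(a_i)$. Since $q = L/N$ and $N\le L < N+1$, we have $1 \le q < 2$, so the factor $q$ only contributes a further factor of $2$, absorbing into the claimed comparison constant $2A^2$. The main point to check carefully is the bookkeeping of constants so that the product of the Harnack factor and the factor from $q\in[1,2)$ comes out to exactly $2A^2$.

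For the second comparison, when $L\le Q$, the entire curve $\gamma$ lies within hyperbolic distance $L \le Q$ of the basepoint $\gamma(0)$, so for every $t\in[0,L]$ the Harnack condition~\eqref{ratio comp}, applied over distance $d(\gamma(t),\gamma(0))\le L\le Q$, gives $A^{-Q}\rho(\gamma(0))\le \rho(\gamma(t))\le A^{Q}\rho(\gamma(0))$ (using the integer-exponent version from Remark~\ref{mult dist rmk} with $K=\lceil Q\rceil$, which I would phrase to get exponent $Q$ up to the rounding). Integrating over $[0,L]$ immediately gives $\int_\gamma \rho\, ds \simeq L\cdot \rho(\gamma(0))$. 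The stated comparison constant is $A^{Q+1}$, the extra $+1$ in the exponent presumably accounting for the ceiling $\lceil Q\rceil \le Q+1$ when invoking the integer-exponent Harnack estimate.

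I expect the only real subtlety to be the constant tracking rather than any conceptual obstacle: the estimates are entirely local and follow directly from~\eqref{ratio comp} and Remark~\ref{mult dist rmk}. The one place that requires a moment's care is verifying $1\le q<2$ from $N\le L<N+1$ (needed both to bound the per-subinterval Harnack factor by $A^2$ and to extract the factor $2$ in the constant $2A^2$), and correctly converting a distance bound of the form $d\le Q$ into the integer exponent $\lceil Q\rceil \le Q+1$ in the Harnack estimate to arrive at $A^{Q+1}$.
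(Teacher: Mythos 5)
Your proof is correct and follows essentially the same route as the paper's: partition $\gamma$ into $N$ subarcs of length $q\in[1,2)$, apply the Harnack condition at scale $2$ to replace $\rho$ by $\rho(a_i)$ on each subarc (factor $A^2$), absorb the length factor $q<2$ to get the constant $2A^2$, and handle the case $L\le Q$ by comparing $\rho$ along the whole curve to $\rho(\gamma(0))$ (which the paper simply declares clear). The only blemish is the parenthetical claim that $q$ is ``less than $1$'' --- it should read ``less than $2$,'' as your own computation $q<(N+1)/N\le 2$ and your later verification $1\le q<2$ show.
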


\begin{proof}
The statement dealing with the case $L\le Q$ is clear as $\rho$ satisfies the
Harnack condition; hence, we will only
consider the case $L\ge 1$.

Note that $1\le q<2$.  For $0 \le i \le N-1$, let 
$\gamma_i \colon [0,q] \to Y$ be the curve given by $\gamma_i(t) = \gamma(iq+t)$.  
Note that $\gamma_i$ is parametrized by arclength because $\gamma$ is.  Hence the length 
$\ell(\gamma_i)$ of $\gamma_i$ satisfies $1\le \ell(\gamma_i)<2$.
By condition~\eqref{ratio comp}, it follows that
\[
\frac{1}{A^2} \rho(a_i) \leq \int_{\gamma_i} \rho ds \leq 2A^2 \rho(a_i).
\]
Hence
\[
\sum_{i=0}^{N-1} \rho(a_i) \simeq \sum_{i=0}^{N-1} \int_{\gamma_i} \rho ds = \int_\gamma \rho ds
\]
with comparison constant $2A^2$.
\end{proof}

\begin{rmk}\label{path comp rmk}
 Lemma~\ref{discrete paths} holds in $X$ as well.
\end{rmk}

\begin{lemma}\label{Phi comp}
Suppose $x, y \in Y$ with $d(x,y) > 1$.  Let $L > 1$ and $\gamma \colon [0,L] \to Y$ be a curve with 
$\gamma(0)=x$ and $\gamma(L)=y$.  Fix $N \in \N$ such that $N \leq L < N+1$.    
Then,
\[
\int_\gamma \rey ds \simeq \sum_{i=0}^{N-1} \rex(\Phi(a_i)) \simeq \biggl( \sum_{i = 0}^{N-2} \rex(\Phi(a_i)) \biggr)+ \rex(\Phi(y))
\]
where $q = \tfrac{L}{N}$ and $a_i = \gamma(iq)$ for $0 \le i\le N$.
In the above, we adopt the convention that $\sum_{i=0}^{N-2}\rex(\Phi(a_0))=0$ if
$N = 1$.  The comparison constants depend solely on $\eps$ and $\tau$.
\end{lemma}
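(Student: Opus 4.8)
The plan is to chain together three comparisons. First I would relate the $Y$-side integral to a discrete sum in $Y$, then transfer that sum to a discrete sum in $X$ using the rough isometry $\Phi$, and finally adjust the endpoint term. The key observation throughout is that all densities here are Harnack conformal densities with constant $A = e^\eps$ (by Remark~\ref{mult dist rmk}), so Lemma~\ref{discrete paths} and its $X$-analogue (Remark~\ref{path comp rmk}) apply directly.

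For the first comparison, I would apply Lemma~\ref{discrete paths} in $Y$ with $\rho = \rey$ to obtain
\[
\int_\gamma \rey\, ds \simeq \sum_{i=0}^{N-1} \rey(a_i),
\]
with comparison constant $2A^2 = 2e^{2\eps}$. The heart of the argument is then to compare each term $\rey(a_i)$ with $\rex(\Phi(a_i))$. Here I would unwind the definitions: $\rey(a_i) = e^{-\eps d(y_0, a_i)}$ and $\rex(\Phi(a_i)) = e^{-\eps d(x_0, \Phi(a_i))}$. Since $\Phi$ is a $\tau$-rough isometry with $\Phi(y_0) = x_0$, we have $\lvert d(x_0, \Phi(a_i)) - d(y_0, a_i)\rvert = \lvert d(\Phi(y_0), \Phi(a_i)) - d(y_0, a_i)\rvert \le \tau$, which gives
\[
e^{-\eps\tau} \le \frac{\rex(\Phi(a_i))}{\rey(a_i)} \le e^{\eps\tau}.
\]
Summing over $i$ yields $\sum_{i=0}^{N-1}\rey(a_i) \simeq \sum_{i=0}^{N-1}\rex(\Phi(a_i))$ with constant $e^{\eps\tau}$, establishing the first $\simeq$ in the claim.

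For the last comparison, I need to replace the top term $\rex(\Phi(a_{N-1}))$ by $\rex(\Phi(y))$ (note $a_N = \gamma(L) = y$, but the sum runs only to $N-1$). The point is that $a_{N-1} = \gamma((N-1)q)$ and $y = \gamma(L)$ are close in $Y$: since $q < 2$, we have $d(a_{N-1}, y) \le \ell(\gamma|_{[(N-1)q,\, L]}) = L - (N-1)q = q < 2$, so by the Harnack property $\rex(\Phi(a_{N-1})) \simeq \rex(\Phi(y))$ with a constant depending only on $\eps$ and $\tau$ (applying the density comparison after noting $d(\Phi(a_{N-1}), \Phi(y)) \le d(a_{N-1}, y) + \tau < 2 + \tau$). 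Swapping this single term changes the sum by at most a bounded multiplicative factor, giving
\[
\sum_{i=0}^{N-1}\rex(\Phi(a_i)) \simeq \Bigl(\sum_{i=0}^{N-2}\rex(\Phi(a_i))\Bigr) + \rex(\Phi(y)),
\]
with the convention for $N=1$ handled separately (there the left sum is a single term $\rex(\Phi(a_0)) = \rex(\Phi(x))$, which is comparable to $\rex(\Phi(y))$ since $d(x,y) \le L < 2$).

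The main obstacle, though it is more bookkeeping than conceptual, is tracking how the comparison constants compose and confirming they depend only on $\eps$ and $\tau$, not on $N$, $L$, or the curve $\gamma$. This is where care is needed: the first step contributes $2e^{2\eps}$, the transfer step contributes a fixed power of $e^{\eps\tau}$ applied uniformly to every term, and the endpoint swap contributes a constant of the form $e^{\eps(2+\tau)}$. Since each constant is uniform in the index $i$ and independent of $N$, they multiply to a single constant $C(\eps,\tau)$, as required.
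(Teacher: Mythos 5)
Your proposal is correct and follows essentially the same path as the paper's proof: discretize via Lemma~\ref{discrete paths}, transfer termwise using the rough isometry and $\Phi(y_0)=x_0$, then swap the last term using $d(a_{N-1},y)<2$. The only cosmetic difference is that you perform the endpoint swap on the $X$ side (via $d(\Phi(a_{N-1}),\Phi(y))<2+\tau$ and the Harnack property of $\rho_\eps^X$), whereas the paper does it on the $Y$ side with constant $e^{2\eps}$; both yield constants depending only on $\eps$ and $\tau$.
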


\begin{proof}
Note that $a_0=x$ and $a_N=y$.
For $0 \le i \le N$ let $b_i = \Phi(a_i)$.  By Lemma~\ref{discrete paths}, 
\[
\int_{\gamma} \rey ds \simeq \sum_{i=0}^{N-1} \rey (a_i)
\]
with comparison constant $2e^{2\eps}$.
Now, $\rey(a_i) = e^{-\eps d(y_0, a_i)}$ and, as $\Phi$ is a $\tau$-rough isometry, we have 
\[
d(y_0, a_i) - \tau \leq d(x_0, b_i) \leq d(y_0, a_i) + \tau.
\]
In particular, 
\[
e^{-\tau\eps} \leq \frac{\rey(a_i)}{\rex(b_i)} \leq e^{\tau\eps}
\]
for all $i$. Hence we have
\[
 \sum_{i=0}^{N-1} \rey (a_i)\simeq\sum_{i=0}^{N-1} \rex(\Phi(a_i)), 
\]
with comparison constant $e^{\tau\eps}$. Hence
\[
\int_\gamma \rey ds \simeq \sum_{i=0}^{N-1} \rex(\Phi(a_i))
\]
with comparison constant $2e^{2\eps+\tau\eps}$.

The second comparability follows as $d(a_{N-1},y) \leq 2$, and 
so $\rey(a_{N-1}) \simeq \rey(y)$ 
with comparison constant $e^{2\eps}$, see Remark~\ref{mult dist rmk}. 
\end{proof}

\begin{lemma}\label{lem:dist-eps-bdy}
Let $Y$ be a Gromov hyperbolic space and $\eps>0$. Then for each $x\in Y$
we have
\begin{equation*} 
\delta_\eps(x):=\dist(x,\partial Y_\eps):=\dist(x,\overline{Y_\eps}\setminus Y_\eps)
\gtrsim e^{-\eps d(x,y_0)}=\rey(x),
\end{equation*}
with comparison constant $1/\eps$. If in addition $Y$ is 
an $M$-roughly starlike space, then 
\[
\delta_\eps(x)\simeq\rey(x)
\]
with comparison constant $[M+\eps^{-1}]e^{\eps M}$.
\end{lemma}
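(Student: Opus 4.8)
The plan is to establish the two assertions separately: the lower bound $\delta_\eps(x)\gtrsim\rey(x)$ with no hypothesis beyond $\eps>0$, and then, assuming $M$-rough starlikeness, the matching upper bound $\delta_\eps(x)\lesssim\rey(x)$. Both reduce to the elementary integral $\int e^{-\eps t}\,dt=1/\eps$; the work is in setting up the right curves and controlling the behavior at $\partial Y_\eps$. A preliminary observation I would record is that on every $d$-ball $\overline{B}(y_0,R)$ the density satisfies $\rey\ge e^{-\eps R}>0$, so $\de$ and $d$ are locally biLipschitz and induce the same topology; since $(Y,d)$ is complete, it follows that any sequence $y_n\to\xi\in\partial Y_\eps$ must escape to infinity, i.e. $d(x,y_n)\to\infty$ (otherwise a subsequence would stay in a fixed $d$-ball, hence be $d$-Cauchy and $d$-converge to a point of $Y$, forcing $\xi\in Y$).

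For the lower bound I would bound the $\de$-length of an arbitrary curve reaching the boundary. Let $\gamma$ be rectifiable from $x$ to such a $y_n$, parametrized by $d$-arclength on $[0,\Lambda]$ with $\Lambda=\ell(\gamma)\ge d(x,y_n)$. Since $d(\gamma(t),y_0)\le d(x,y_0)+t$, I get the pointwise estimate $\rey(\gamma(t))\ge\rey(x)\,e^{-\eps t}$, so that
\[
\lep(\gamma)=\int_0^\Lambda \rey(\gamma(t))\,dt\ge \rey(x)\int_0^{d(x,y_n)}e^{-\eps t}\,dt=\frac{\rey(x)}{\eps}\bigl(1-e^{-\eps d(x,y_n)}\bigr).
\]
Taking the infimum over such curves gives $\de(x,y_n)\ge \tfrac{\rey(x)}{\eps}\bigl(1-e^{-\eps d(x,y_n)}\bigr)$, and letting $n\to\infty$ (so $d(x,y_n)\to\infty$) yields $\de(x,\xi)\ge \rey(x)/\eps$ for every $\xi\in\partial Y_\eps$. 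Hence $\delta_\eps(x)\ge \rey(x)/\eps$, the claimed comparison constant $1/\eps$.

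For the upper bound I would exhibit one efficient curve. By $M$-rough starlikeness there is a geodesic ray $\gamma\colon[0,\infty)\to Y$ with $\gamma(0)=y_0$ and some $t_0\ge0$ satisfying $d(x,\gamma(t_0))\le M$; note $t_0=d(\gamma(t_0),y_0)\ge d(x,y_0)-M$. I concatenate a $d$-geodesic $\sigma$ from $x$ to $\gamma(t_0)$ (of $d$-length $\le M$) with the tail $\gamma|_{[t_0,\infty)}$, which is $\de$-Cauchy (as $\de(\gamma(s),\gamma(t))\le\int_s^t e^{-\eps r}\,dr$) and escapes every $d$-ball, hence converges to some $\xi\in\partial Y_\eps$. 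Along $\sigma$ one has $d(z,y_0)\ge d(x,y_0)-M$, so $\rey\le \rey(x)e^{\eps M}$ and $\lep(\sigma)\le M\,\rey(x)e^{\eps M}$; along the tail $\rey(\gamma(t))=e^{-\eps t}$, so $\lep(\gamma|_{[t_0,\infty)})=e^{-\eps t_0}/\eps\le \rey(x)e^{\eps M}/\eps$. Adding these gives
\[
\delta_\eps(x)\le\de(x,\xi)\le\Bigl(M+\tfrac1\eps\Bigr)e^{\eps M}\,\rey(x),
\]
which together with the lower bound is exactly $\delta_\eps(x)\simeq\rey(x)$ with comparison constant $[M+\eps^{-1}]e^{\eps M}$.

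The main obstacle is the topological bookkeeping at the boundary used in the lower bound: one must be certain that reaching a point of $\partial Y_\eps$ forces the endpoints (equivalently the curve) to run off to infinity in the metric $d$, for otherwise the geometric-series estimate would cut off at a finite length and fail to deliver the full factor $1/\eps$. This is precisely where completeness of $(Y,d)$ and the local comparability of $\de$ with $d$ enter. Once that point is secured, both inequalities are routine consequences of the monotone bound $\rey(\gamma(t))\gtrsim\rey(x)e^{-\eps t}$ and its reverse along geodesics emanating from $y_0$.
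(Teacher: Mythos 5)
Your proof is correct and follows essentially the same route as the paper: the lower bound comes from the pointwise estimate $\rho_\eps^Y(\gamma(t))\ge\rho_\eps^Y(x)e^{-\eps t}$ integrated along any curve reaching the boundary, and the upper bound from concatenating a geodesic of length at most $M$ (from rough starlikeness) with the tail of the geodesic ray from $y_0$, yielding exactly the constants $1/\eps$ and $[M+\eps^{-1}]e^{\eps M}$. The only difference is that you spell out the topological bookkeeping (that points of $\partial Y_\eps$ correspond to escape to infinity in $d$, via completeness and local comparability of $d_\eps$ with $d$), which the paper leaves implicit by phrasing the infimum over paths that leave every compact set.
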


\begin{proof}
Let $x\in Y$ and $\gamma$ be any path from $x$ that leaves every compact subset of $Y$.
Then we have
\[
\int_\gamma e^{-\eps d(\gamma(t),y_0)}\, dt\ge \int_0^\infty e^{-\eps[d(y_0,x)+t]}\, dt
=\frac{e^{-\eps d(y_0,x)}}{\eps}.
\]
Taking the infimum over all such $\gamma$ gives
\[
\delta_\eps(x)\ge \frac{\rey(x)}{\eps}.
\]

Now suppose that $Y$ is also $M$-roughly starlike. 
Let $x\in Y$ and 
$\gamma:[0,\infty)\to Y$ be a geodesic ray from $y_0$ so that there is some 
$t_0\in[0,\infty)$ for which 
$d(x,\gamma(t_0))\le M$. Let $\beta$ be a geodesic with end points $x$ and $\gamma(t_0)$; then
the concatenation $\gamma_*$ of $\gamma\vert_{[t_0,\infty)}$ and $\beta$ gives us that
\[
\delta_\eps(x)\le \int_{\gamma_*}e^{-\eps d(\gamma_*(t),y_0)}\, dt.
\]
Note that for points $w\in\beta$, $d(x,y_0)-M\le d(w,y_0)\le d(x,y_0)+M$, and so
\[
\delta_\eps(x)\le Me^{\eps M}e^{-\eps d(x,y_0)}+\int_{t_0}^\infty e^{-\eps t}\, dt
  \le Me^{\eps M}e^{-\eps d(x,y_0)}+\eps ^{-1}e^{-\eps t_0}.
\]
Moreover, $t_0=d(\gamma(t_0),y_0)\ge d(y_0,x)-M$. Therefore
\[
\delta_\eps(x)\le [M+\eps^{-1}]e^{\eps M}\, e^{-\eps d(y_0,x)}.
\]
It follows that
\begin{equation*} 
\delta_\eps(z)\simeq e^{-\eps d(z,y_0)},
\end{equation*}
with comparison constant $[M+\eps^{-1}]e^{\eps M}$.
\end{proof}

In the proof of the following lemma we use $\Phi^{-1}$ together with $\Phi$, see 
Lemma~\ref{lem:inverse-rough-isom} regarding the construction of
$\Phi^{-1}$.

\begin{lemma}\label{lem:compare-deltaSubEps}
Let $X$ and $Y$ be two Gromov hyperbolic spaces and $\Phi:Y\to X$ be a $\tau$-rough isometry. 
Then for $\eps>0$ and for each $y\in Y$,
\[
\delta_\eps(y)\simeq\delta_\eps(\Phi(y))
\]
with the comparison constant depending solely on $\eps$ and $\tau$.
\end{lemma}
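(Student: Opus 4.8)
The plan is to realize each boundary distance as an infimum of $\eps$-lengths of \emph{escaping} paths and then to transport such paths across $\Phi$ and $\Phi^{-1}$ by discretization. Since a point of $\partial Y_\eps$ can be reached only along a path whose trace eventually exits every compact subset of $(Y,d)$, and since any escaping path of finite $\eps$-length is $\de$-Cauchy and converges to a boundary point, one has
\[
\delta_\eps(y)=\inf_\gamma\int_\gamma\rey\,ds,
\]
the infimum being over all paths $\gamma$ in $Y$ issuing from $y$ that leave every compact set; the analogous identity holds for $\delta_\eps(\Phi(y))$ in $X_\eps$. I would then establish the two inequalities $\delta_\eps(\Phi(y))\lesssim\delta_\eps(y)$ and $\delta_\eps(y)\lesssim\delta_\eps(\Phi(y))$ with constants depending only on $\eps,\tau$.

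For the first inequality, fix an escaping path $\gamma$ from $y$ with $\int_\gamma\rey\,ds\le\delta_\eps(y)+\eta$, parametrize it by $d$-arclength, and set $a_i=\gamma(i)$ and $b_i=\Phi(a_i)$ for $i=0,1,2,\dots$, so that $b_0=\Phi(y)$ and $d(b_i,b_{i+1})\le 1+\tau$. Because $d(x_0,b_i)\ge d(y_0,a_i)-\tau\to\infty$, the concatenation $\sigma$ of the geodesic segments $[b_i,b_{i+1}]$ is an escaping path in $X$ from $\Phi(y)$, so that $\delta_\eps(\Phi(y))\le\ell_\eps(\sigma)$. Applying Lemmas~\ref{discrete paths} and~\ref{Phi comp} to truncations of $\gamma$ and summing gives $\int_\gamma\rey\,ds\simeq\sum_i\rey(a_i)\simeq\sum_i\rex(b_i)$, the last comparison being the rough-isometry estimate $\rey(a_i)\simeq\rex(b_i)$. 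On the other hand each segment $[b_i,b_{i+1}]$ has length at most $1+\tau$, so by the Harnack bound of Remark~\ref{mult dist rmk} the density $\rex$ varies along it by at most the factor $e^{\eps(1+\tau)}$; hence $\int_{[b_i,b_{i+1}]}\rex\,ds\le(1+\tau)e^{\eps(1+\tau)}\,\rex(b_i)$, and summing yields $\ell_\eps(\sigma)\lesssim\sum_i\rex(b_i)\simeq\int_\gamma\rey\,ds$. Letting $\eta\to0$ gives $\delta_\eps(\Phi(y))\lesssim\delta_\eps(y)$.

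For the reverse inequality I would run the identical argument for the rough isometry $\Phi^{-1}\colon X\to Y$ of Lemma~\ref{lem:inverse-rough-isom} and Remark~\ref{rmk:roughed up} (noting that $\Phi^{-1}(x_0)$ lies within $\tau$ of $y_0$, which only affects the density-comparison constant), obtaining $\delta_\eps(\Phi^{-1}(x))\lesssim\delta_\eps(x)$ for all $x\in X$ and, with $x=\Phi(y)$, the bound $\delta_\eps(\Phi^{-1}(\Phi(y)))\lesssim\delta_\eps(\Phi(y))$. It remains to absorb the fact that $\Phi^{-1}(\Phi(y))\ne y$ in general, which I would do via a short perturbation estimate: if $d(y,y')\le\tau$ then $\delta_\eps(y)\simeq\delta_\eps(y')$. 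Indeed, prepending a geodesic $[y,y']$ (of $\eps$-length at most $\tau e^{\eps\tau}\rey(y')$ by Remark~\ref{mult dist rmk}) to any escaping path from $y'$ produces an escaping path from $y$, and the lower bound $\delta_\eps(y')\gtrsim\rey(y')$ of Lemma~\ref{lem:dist-eps-bdy} shows this added length is $\lesssim\delta_\eps(y')$; the symmetric estimate then gives the claim. Since $d(y,\Phi^{-1}(\Phi(y)))\le\tau$, we conclude $\delta_\eps(y)\simeq\delta_\eps(\Phi^{-1}(\Phi(y)))\lesssim\delta_\eps(\Phi(y))$.

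The hard part is that $\Phi$ is only a rough isometry and in particular may be discontinuous, so there is no image curve $\Phi\circ\gamma$ to integrate against; the discretization above is precisely the substitute, and the essential observation is that reconnecting the bounded-displacement images $b_i$ by geodesics cannot inflate the $\eps$-length, thanks to the Harnack property over distances bounded by $1+\tau$. Two subsidiary points requiring care are that the competing paths have infinite $d$-length, forcing the finite-length Lemmas~\ref{discrete paths} and~\ref{Phi comp} to be applied to truncations and then summed, and that $\Phi^{-1}\circ\Phi$ differs from the identity by at most $\tau$, which the perturbation estimate absorbs. I note finally that the argument uses only the lower bound of Lemma~\ref{lem:dist-eps-bdy}, so no rough starlikeness of $X$ or $Y$ is needed and all constants depend solely on $\eps$ and $\tau$.
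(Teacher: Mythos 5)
Your proposal is correct and follows essentially the same route as the paper's proof: both characterize $\delta_\eps$ as an infimum of $\eps$-lengths of escaping paths (using that $Y_\eps$ and $X_\eps$ are length spaces), discretize such a path at bounded spacing, transport the sample points by $\Phi$ and reconnect their images by geodesics whose $\eps$-lengths are controlled via the Harnack property, and then obtain the reverse inequality by running the same argument for $\Phi^{-1}$ and absorbing the discrepancy $d(y,\Phi^{-1}(\Phi(y)))\le\tau$ through the estimate $d_\eps(y,\Phi^{-1}(\Phi(y)))\lesssim\rey(\Phi^{-1}(\Phi(y)))\lesssim\delta_\eps(\Phi^{-1}(\Phi(y)))$ from Lemma~\ref{lem:dist-eps-bdy}. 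The remaining differences (unit spacing instead of $(1+\tau)$-spacing, and phrasing the final triangle inequality as a concatenation of paths) are cosmetic.
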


\begin{proof}
Let $y\in Y$ and $x:=\Phi(y)$. Let $\gamma:[0,\infty)\to Y$ 
be any path from $y$ that leaves every compact subset of $Y$. Set $a_0:=y$ and
for $k\in\N$ let $a_k:=\gamma((1+\tau)k)$. 
Recall the definition of $\ell_\eps(\gamma)$ from Definition~\ref{def:uniformization}.
Then a simple modification of Lemma~\ref{discrete paths} together with the rough isometric 
equivalence of $X$ and $Y$ tells us that 
\[
 \ell_\eps(\gamma)\simeq\sum_{k=0}^\infty \rey(a_k)\simeq\sum_{k=0}^\infty\rex(\Phi(a_k)).
\]
Let $\beta_k$ be a hyperbolic geodesic in $X$ with end points $\Phi(a_k)$ and $\Phi(a_{k+1})$
and $\beta$ be the concatenation of the paths $\beta_k$, $k=0,1,\cdots$.
Since $\ell(\gamma\vert_{[k,k+1]})=1+\tau$, it follows that 
$1\le d(\Phi(a_k),\Phi(a_{k+1}))\le 1+2\tau$;
therefore $1\le \ell(\beta_k)\le 1+2\tau$. 
Therefore by the second part of Lemma~\ref{discrete paths} and the above estimate,
\[
\ell_\eps(\beta)=\sum_{k=0}^\infty\ell_\eps(\beta_k)
\simeq\sum_{k=0}^\infty\rex(\Phi(a_k))\simeq\ell_\eps(\gamma).
\]
Note that as $\Phi$ is a rough isometry and $\gamma$ leaves every bounded subset of the
proper space $Y$, the path $\beta$ also leaves every bounded subset of $X$.
Since we require in this paper that $X$ and $Y$ are locally compact geodesic spaces,
we know from the Hopf-Rinow theorem that $X_\eps$ and $Y_\eps$ are length spaces.
Hence, taking the infimum over all $\gamma:[0,\infty)\to Y$ with $\gamma(0)=y$ as above yields
\[
\ell_\eps(\beta)\lesssim \delta_\eps(y).
\]
It follows that
\[
\delta_\eps(\Phi(y))\le \ell_\eps(\beta) \lesssim\delta_\eps(y).
\]
Reversing the roles of $X$ and $Y$, and replacing $\Phi$ with $\Phi^{-1}$ gives
\[
 \delta_\eps(\Phi^{-1}(\Phi(y)))\lesssim\delta_\eps(\Phi(y)).
\]
Since $d(y,\Phi^{-1}(\Phi(y)))\le \tau$, it follows from Lemma~\ref{lem:dist-eps-bdy} 
and the second part of Lemma~\ref{discrete paths} that
\begin{align}\label{eq:dist-to-bdy-closeby}
\delta_\eps(y)\le \delta_\eps(\Phi^{-1}(\Phi(y)))+d_\eps(y,\Phi^{-1}(\Phi(y)))
 &\lesssim \delta_\eps(\Phi^{-1}(\Phi(y)))+\tau\rey(\Phi^{-1}(\Phi(y)))\notag\\
\lesssim\delta_\eps(\Phi^{-1}(\Phi(y))).
\end{align}
\end{proof}

\begin{lemma} \label{lem:uniformizing-close-by-points}
Let $x,y\in Y$ be such that $d(x,y)\le 4+\tau$, and let $\gamma$ be a 
hyperbolic geodesic in $Y$ with
end points $x,y$. Then 
\begin{equation}\label{eq:d-eps-est-4-close-points}
\ell_\eps(\gamma)\simeq d_\eps(x,y)\simeq e^{-\eps d(x,y_0)}d(x,y)
\end{equation}
and $\gamma$ is a uniform curve with respect to the metric $\de$ on $Y_\eps$, with
uniformity constant depending only on $\eps$, and $\tau$.
\end{lemma}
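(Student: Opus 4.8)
The plan is to reduce everything to Lemmas~\ref{discrete paths} and~\ref{lem:dist-eps-bdy}, since the hypothesis $d(x,y)\le 4+\tau$ places us entirely in the ``short curve'' regime where oscillation of the density is controlled. I would write $Q:=4+\tau$ and $r:=d(x,y_0)$, and parametrize $\gamma$ by arclength with $\gamma(0)=x$, so that $\ell(\gamma)=d(x,y)\le Q$. Because $\rey$ satisfies the Harnack condition~\eqref{ratio comp} with $A=e^\eps$ (Remark~\ref{mult dist rmk}), the second part of Lemma~\ref{discrete paths}, applied with $\rho=\rey$ and $L=d(x,y)\le Q$, immediately gives $\ell_\eps(\gamma)\simeq d(x,y)\,\rey(x)=d(x,y)\,e^{-\eps r}$ with comparison constant $e^{\eps(Q+1)}$ depending only on $\eps$ and $\tau$. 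Since $d_\eps(x,y)\le\ell_\eps(\gamma)$, this already yields the upper half of~\eqref{eq:d-eps-est-4-close-points}.

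For the lower bound on $d_\eps(x,y)$ I would argue directly against an arbitrary competitor. Let $\sigma$ be any rectifiable curve from $x$ to $y$, parametrized by arclength; then $\ell(\sigma)\ge d(x,y)$, so the initial subcurve $\sigma|_{[0,\,d(x,y)]}$ is well defined. Every point $z$ on it satisfies $d(z,x)\le d(x,y)\le Q$, hence $d(z,y_0)\le r+Q$ and $\rey(z)\ge e^{-\eps(r+Q)}$. Integrating over just this initial portion gives $\ell_\eps(\sigma)\ge e^{-\eps(r+Q)}\,d(x,y)$, and taking the infimum over $\sigma$ yields $d_\eps(x,y)\gtrsim e^{-\eps r}d(x,y)$. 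Combined with the previous paragraph this establishes $\ell_\eps(\gamma)\simeq d_\eps(x,y)\simeq e^{-\eps d(x,y_0)}d(x,y)$, which is~\eqref{eq:d-eps-est-4-close-points}; in particular the quasiconvexity condition~(1) of Definition~\ref{def:uniform} holds with $\lambda$ equal to the comparison constant just obtained.

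It then remains to verify the corkscrew condition~(2) of Definition~\ref{def:uniform} for each $z\in\gamma$. Since $\gamma(x,z)$ and $\gamma(z,y)$ partition $\gamma$, the minimum of their $\ell_\eps$-lengths is at most $\ell_\eps(\gamma)\lesssim e^{-\eps r}d(x,y)\le Q\,e^{-\eps r}$. On the other hand $z$ lies on a geodesic of length at most $Q$ issuing from $x$, so $d(z,y_0)\le r+Q$, and the first part of Lemma~\ref{lem:dist-eps-bdy} gives $\delta_\eps(z)\gtrsim\rey(z)=e^{-\eps d(z,y_0)}\ge e^{-\eps(r+Q)}\gtrsim e^{-\eps r}$. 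Dividing, the factors $e^{-\eps r}$ cancel, so the ratio $\min\{\ell_\eps(\gamma(x,z)),\ell_\eps(\gamma(z,y))\}/\delta_\eps(z)$ is bounded by a constant depending only on $\eps$ and $Q=4+\tau$, giving condition~(2) after enlarging $\lambda$ if needed.

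The main obstacle, though a mild one, is the lower bound on $d_\eps(x,y)$: a competing curve may be long and may wander far from $y_0$, where $\rey$ is small. The key observation is that one never needs the whole competitor, only its initial arclength-$d(x,y)$ segment, on which the density stays comparable to $e^{-\eps r}$. Everything else is bookkeeping of the comparison constants from Lemmas~\ref{discrete paths} and~\ref{lem:dist-eps-bdy}, all of which depend only on $\eps$ and $\tau$ once the bound $d(x,y)\le 4+\tau$ is used to control the oscillation of $\rey$ along $\gamma$.
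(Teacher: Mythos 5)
Your proof is correct and follows essentially the same route as the paper's: bound the oscillation of $\rey$ along the short geodesic to get $\ell_\eps(\gamma)\simeq d(x,y)\rey(x)$, bound $\de(x,y)$ from below by integrating the density over an initial portion of an arbitrary competitor curve, and combine Lemma~\ref{lem:dist-eps-bdy} with the bound $\ell_\eps(\gamma)\lesssim e^{-\eps d(x,y_0)}$ for the uniformity condition. The only cosmetic differences are that you invoke the second part of Lemma~\ref{discrete paths} where the paper re-derives the Harnack-type estimate directly, and you truncate the competitor at arclength $d(x,y)$ rather than at the first time it reaches distance $d(x,y)$ from $x$; both truncations serve the identical purpose.
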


\begin{proof}
Recall from Definition~\ref{def:uniformization}
that the length $\ell_\eps(\gamma)$ of $\gamma$ in the uniformized metric $\de$ is given by
\[
\ell_\eps(\gamma)=\int_\gamma e^{-\eps d(\gamma(t),y_0)}\, dt,
\]
and as 
\[
d(x, y_0) - 4 - \tau \leq d(x,y_0)-d(x,z)\le d(y_0,z)\le d(x,y_0)+d(x,z) \leq d(x, y_0) + 4 + \tau.
\]
 for each $z$ in the trajectory of $\gamma$, we see that
\[
\ell(\gamma) e^{-\eps d(x,y_0)}e^{-\eps(4+\tau)} \leq \ell_\eps(\gamma) \leq \ell(\gamma) e^{-\eps d(x, y_0)}e^{\eps(4+\tau)}.
\]
Observe that $d(x,y)=\ell(\gamma)$.
On the other hand, with $\beta$ any rectifiable non-geodesic curve in $Y$ with end points $x$ and $y$, 
we must have $\ell(\beta)>d(x,y)$, and so with $t_0\in[0,\ell(\beta)]$ the smallest number for which
$d(x,\beta(t_0))=d(x,y)$, we get
\[
\int_\beta\rey\, ds \geq \int_0^{t_0}\rey\circ\beta(t)\, dt\ge  d(x,y) e^{-\eps d(x,y_0)} e^{-\eps (4 + \tau)}.
\]
Therefore 
\[
d(x,y)e^{-\eps d(x, y_0)}e^{\eps(4+\tau)} \ge 
\ell_\eps(\gamma)\ge \de(x,y)=\inf_\beta\int_\beta\rey\, ds\ge d(x,y) e^{-\eps d(x,y_0)} e^{-\eps (4 + \tau)}.
\]
It immediately follows that
\[
\de(x,y)\simeq d(x,y) e^{-\eps d(x,y_0)},
\]
and consequently, we have that $\ell_\eps(\gamma)\lesssim \de(x,y)$,
that is, $\gamma$ is a $C$-quasiconvex curve in $Y_\eps$, with constant $C$ depending only on $\eps$ and $\tau$. 
Recall that a curve is a $C$-quasiconvex if its length is dominated by at most $C$ times the distance between
the endpoints of the curve (see Definition~\ref{def:uniform}). Moreover, 
from Lemma~\ref{lem:dist-eps-bdy} and the fact that $d(x,y)\le 4+\tau$ 
we know that for $z\in\gamma$,
\[
\delta_\eps(z)\gtrsim e^{-\eps d(z,y_0)}\gtrsim e^{-\eps d(x,y_0)}\gtrsim\ell_\eps(\gamma),
\]
that is, $\gamma$ is a uniform curve, with a uniformity constant that depend only on $\eps$
and $\tau$.
\end{proof}

From the above lemma, to show that $Y_\eps$ is a uniform domain it suffices to show that $x,y\in Y$ can be 
connected by a uniform curve when $d(x,y)\ge 4+\tau$. This is the focus of the remaining discussion.

\begin{lemma}\label{de comp}
Let $x,y \in Y$ be such that $d(x,y) \ge 2 + \tau$.  Then
\[
\de(x,y) \simeq \de (\Phi(x), \Phi(y)).
\]
\end{lemma}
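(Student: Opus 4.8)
The plan is to prove the two estimates $\de(\Phi(x),\Phi(y))\lesssim\de(x,y)$ and $\de(x,y)\lesssim\de(\Phi(x),\Phi(y))$ separately, in each case by the discretize--map--reconnect device already used in Lemma~\ref{lem:compare-deltaSubEps}. The first thing to record is that the hypothesis $d(x,y)\ge 2+\tau$ forces $d(\Phi(x),\Phi(y))\ge d(x,y)-\tau\ge 2$, so that every rectifiable curve joining $x$ to $y$ in $Y$, and every one joining $\Phi(x)$ to $\Phi(y)$ in $X$, has $d$-length exceeding $1$. This is precisely the requirement needed to feed such curves into Lemma~\ref{discrete paths} and Lemma~\ref{Phi comp}, and it is the reason for the lower bound $2+\tau$ on $d(x,y)$.

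For $\de(\Phi(x),\Phi(y))\lesssim\de(x,y)$ I would fix a nearly $\de$-optimal rectifiable curve $\gamma\colon[0,L]\to Y$ from $x$ to $y$, parametrized by $d$-arclength, and discretize it as $a_i=\gamma(iq)$ with $q=\tfrac{L}{N}$ and $N\le L<N+1$, exactly as in Lemma~\ref{Phi comp}. I then form the curve $\beta$ in $X$ by concatenating, for $0\le i\le N-1$, hyperbolic geodesics $\beta_i$ joining $\Phi(a_i)$ to $\Phi(a_{i+1})$. Since $d(a_i,a_{i+1})=q<2$, the rough isometry gives $d(\Phi(a_i),\Phi(a_{i+1}))<2+\tau$, so each $\beta_i$ has $d$-length at most $2+\tau$; the second part of Lemma~\ref{discrete paths} (valid in $X$ by Remark~\ref{path comp rmk}) then yields $\ell_\eps(\beta_i)\simeq d(\Phi(a_i),\Phi(a_{i+1}))\,\rex(\Phi(a_i))\lesssim\rex(\Phi(a_i))$. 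Summing over $i$ and invoking Lemma~\ref{Phi comp} gives $\ell_\eps(\beta)=\sum_{i=0}^{N-1}\ell_\eps(\beta_i)\lesssim\sum_{i=0}^{N-1}\rex(\Phi(a_i))\simeq\ell_\eps(\gamma)$. As $\beta$ joins $\Phi(x)$ to $\Phi(y)$, letting $\gamma$ approach the infimum gives $\de(\Phi(x),\Phi(y))\le\ell_\eps(\beta)\lesssim\de(x,y)$, with constants depending only on $\eps$ and $\tau$.

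For the reverse estimate I would run the identical construction with the roles of $X$ and $Y$ interchanged and $\Phi$ replaced by $\Phi^{-1}$, the $X$-analogues of Lemma~\ref{discrete paths} and Lemma~\ref{Phi comp} again being available by Remark~\ref{path comp rmk}. Discretizing a nearly optimal curve $\beta$ in $X$ from $\Phi(x)$ to $\Phi(y)$ as $c_i=\beta(ip)$ and reconnecting the points $\Phi^{-1}(c_i)$ by $Y$-geodesics produces a curve whose endpoints are $\Phi^{-1}(\Phi(x))$ and $\Phi^{-1}(\Phi(y))$, not $x$ and $y$. I would therefore prepend a short $Y$-geodesic from $x$ to $\Phi^{-1}(\Phi(x))$ and append one from $\Phi^{-1}(\Phi(y))$ to $y$. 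Because $d(x,\Phi^{-1}(\Phi(x)))\le\tau$ and $d(y,\Phi^{-1}(\Phi(y)))\le\tau$ by Remark~\ref{rmk:roughed up}, the second part of Lemma~\ref{discrete paths} bounds the $\de$-lengths of these two correction geodesics by constant multiples of $\rey(x)$ and $\rey(y)$; the middle portion has $\de$-length $\simeq\sum\rey(\Phi^{-1}(c_i))\simeq\sum\rex(c_i)\simeq\ell_\eps(\beta)$, as in the previous paragraph.

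The main obstacle is to absorb the two endpoint corrections, i.e.\ to see that $\rey(x)$ and $\rey(y)$ are $\lesssim\de(\Phi(x),\Phi(y))$. For this I would establish the elementary lower bound $\de(\Phi(x),\Phi(y))\gtrsim\rex(\Phi(x))$, and likewise with $\Phi(y)$: any curve from $\Phi(x)$ to $\Phi(y)$ must run $d$-distance at least $1$ away from $\Phi(x)$ since $d(\Phi(x),\Phi(y))\ge 2$, and on that initial stretch $\rex$ stays within a factor $e^{\eps}$ of $\rex(\Phi(x))$, whence its $\de$-length is at least $e^{-\eps}\rex(\Phi(x))$. Since $\Phi(y_0)=x_0$ and $\Phi$ is a $\tau$-rough isometry we have $\rey(x)\simeq\rex(\Phi(x))$ and $\rey(y)\simeq\rex(\Phi(y))$, so both corrections are $\lesssim\de(\Phi(x),\Phi(y))$. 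Combining the three pieces yields $\de(x,y)\lesssim\ell_\eps(\beta)\lesssim\de(\Phi(x),\Phi(y))$, and together with the first inequality this gives $\de(x,y)\simeq\de(\Phi(x),\Phi(y))$ with comparison constant depending only on $\eps$ and $\tau$.
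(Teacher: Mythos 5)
Your proposal is correct and follows essentially the same route as the paper: both directions are proved by discretizing a nearly optimal curve, transporting the sample points with $\Phi$ (resp.\ $\Phi^{-1}$), reconnecting by geodesics whose $\de$-lengths are controlled via the Harnack/short-curve estimates, comparing the resulting sums through Lemma~\ref{Phi comp}, and absorbing the endpoint corrections $x\leftrightarrow\Phi^{-1}(\Phi(x))$ by the lower bound $\de(\Phi(x),\Phi(y))\gtrsim\rex(\Phi(x))+\rex(\Phi(y))$. The only cosmetic difference is that you derive this last lower bound by a direct ``the curve must travel unit distance'' argument, whereas the paper extracts it from Lemma~\ref{discrete paths}; both are equally valid.
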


See Lemma~\ref{lem:inverse-rough-isom} regarding the construction of
$\Phi^{-1}$.

\begin{proof}
Let $\gamma \colon [0, L] \to Y$ be any curve with $\gamma(0) = x$, $\ell(\gamma)=L$, and $\gamma(L) = y$.  Note that
$L \ge 2+\tau \ge 2$.  We fix $N \in \N$ such that $N \le L < N+1$.  
Let $q = \tfrac{L}{N}$ and, for $0 \le i \le N$, let $a_i = \gamma(i q)$ and $b_i = \Phi(a_i)$.  
Then $d(b_i,b_{i+1}) \leq d(a_i,a_{i+1})+\tau \le 4 + \tau$,
and so by Lemma~\ref{lem:uniformizing-close-by-points} we have 
\[
\de(b_i, b_{i+1}) \lesssim e^{-\eps d(b_i, x_0)}  = \rex(b_i)
\]
with comparability constant depending only on $\eps$ and $\tau$. 
It follows that
\[
\de(\Phi(x), \Phi(y)) \leq \sum_{i=0}^{N-1} \de(b_i, b_{i+1}) \lesssim \sum_{i=0}^{N-1} \rex(b_i).
\]
By Lemma~\ref{Phi comp}, we have $\sum_{i=0}^{N-1} \rex(b_i) \simeq \int_{\gamma} \rey ds$.  
Infimizing over all paths $\gamma$ connecting $x$ to $y$ yields
\[
\de(\Phi(x), \Phi(y)) \lesssim \inf_{\gamma} \int_{\gamma} \rey ds = \de(x,y) .
\]

Next, note that $d(\Phi(x),\Phi(y)) \ge d(x,y)-\tau \geq 2$.  Hence, for $\Phi^{-1}(\Phi(x)) = x'$ and 
$\Phi^{-1}(\Phi(y)) = y'$ we can apply the same argument above to conclude that
\[
\de(x', y') \lesssim \de(\Phi(x), \Phi(y)).
\]

It remains to relate $\de(x', y')$ with $\de(x,y)$.  
As $d(\Phi^{-1}\circ\Phi(z),z)\le \tau$ for each $z\in Y$, it follows from   
Lemma~\ref{lem:uniformizing-close-by-points} that 
$\de(x',x) \lesssim e^{-\eps d(x,y_0)}$ and $\de(y',y) \lesssim e^{-\eps d(y,y_0)}$.
Moreover, $d(x,y_0)\ge d(\Phi(x),x_0)-\tau$ and $d(y,y_0)\ge d(\Phi(y),x_0)-\tau$.
Hence,
\[
\de(x,y)\le \de(x,x')+\de(x',y')+\de(y',y)\lesssim \de(\Phi(x),\Phi(y))+e^{-\eps d(\Phi(x),x_0)}+e^{-\eps d(\Phi(y),x_0)}.
\]
Since $d(\Phi(x),\Phi(y))\ge d(x,y)-\tau\ge 2$, we apply  
Lemma~\ref{discrete paths} to see that
whenever $\beta$ is a rectifiable curve in $X$ with end points $\Phi(x)$ and $\Phi(y)$,
\[
\int_\beta \rho_\eps^X\, ds\simeq \sum_{i=0}^{N-1}\rho_\eps^X(a_i)\gtrsim \rho_\eps^X(\Phi(x))+\rho_\eps^X(\Phi(y)).
\]
Taking the infimum over all such $\beta$ gives
\[
\de(\Phi(x),\Phi(y))\gtrsim e^{-\eps d(\Phi(x),x_0)}+e^{-\eps d(\Phi(y),x_0)},
\]
from which we obtain the desired conclusion
\[
\de(x,y)\lesssim \de(\Phi(x),\Phi(y)). \qedhere
\] 
\end{proof}

\begin{thm}\label{thm:main}
Let $(X,d)$ and $(Y,d)$ be two complete Gromov hyperbolic geodesic spaces, and suppose that there exists
a $\tau$-rough isometry $\Phi:Y\to X$. Let $y_0\in Y$ and set $x_0=\Phi(y_0)$. If 
$\eps>0$ is such that $(X_\eps,\de)$ is a uniform domain with a uniformity constant $\lambda$, then
$(Y_\eps,\de)$ is also a uniform domain with a uniformity constant that depends solely on $\lambda$, $\eps$,
and $\tau$.
\end{thm}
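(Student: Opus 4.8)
The plan is to verify the two conditions of Definition~\ref{def:uniform} for $Y_\eps$ by transporting a uniform curve from $X_\eps$ and reconnecting its pulled-back sample points. By Lemma~\ref{lem:uniformizing-close-by-points} every pair of points at hyperbolic distance at most $4+\tau$ is already joined by a uniform curve in $Y_\eps$ with constants depending only on $\eps$ and $\tau$, so I only need to treat $x,y\in Y$ with $d(x,y)\ge 4+\tau$. For such a pair one has $d(\Phi(x),\Phi(y))\ge d(x,y)-\tau\ge 4$, so $\Phi(x)\neq\Phi(y)$ and, since $X_\eps$ is $\lambda$-uniform, there is a $\lambda$-uniform curve $\sigma$ in $X_\eps$ from $\Phi(x)$ to $\Phi(y)$. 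The idea is to discretize $\sigma$, pull the sample points back to $Y$ by $\Phi^{-1}$, join consecutive pulled-back points by hyperbolic geodesics (each uniform by Lemma~\ref{lem:uniformizing-close-by-points}), and concatenate these, together with two short geodesics at the ends, into a single curve $\eta$ in $Y$ from $x$ to $y$; the task is then to show that $\eta$ is uniform.

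To set up the discretization I first note that $\sigma$, being the continuous image of a compact parameter interval into the open domain $X_\eps$, has compact image; since $d$ and $\de$ induce the same topology on the interior and $X$ is proper, this image is bounded in the hyperbolic metric, whence $\ell(\sigma)<\infty$. I may therefore parametrize $\sigma$ by hyperbolic arclength and sample it exactly as in Lemma~\ref{discrete paths}: choosing $N$ with $N\le\ell(\sigma)<N+1$ and spacing $q=\ell(\sigma)/N\in[1,2)$, set $p_i=\sigma(iq)$ with $p_0=\Phi(x)$, $p_N=\Phi(y)$, and $q_i=\Phi^{-1}(p_i)$. Then $d(p_i,p_{i+1})\le q<2$, so $d(q_i,q_{i+1})\le 2+\tau$, and by Remark~\ref{rmk:roughed up} the endpoints satisfy $d(x,q_0)\le\tau$ and $d(y,q_N)\le\tau$. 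Joining $x$ to $q_0$, each $q_i$ to $q_{i+1}$, and $q_N$ to $y$ by hyperbolic geodesics $\eta_i$, and letting $\eta$ be their concatenation, produces a genuine (finite) curve from $x$ to $y$ in which each $\eta_i$ is uniform by Lemma~\ref{lem:uniformizing-close-by-points}.

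Quasiconvexity of $\eta$ should follow directly from the discretization machinery. By Lemma~\ref{lem:uniformizing-close-by-points} each $\lep(\eta_i)\simeq\rey(q_i)\simeq\rex(p_i)$, so $\lep(\eta)\simeq\sum_i\rex(p_i)$; by Lemma~\ref{Phi comp} (equivalently Lemma~\ref{discrete paths} applied in $X$, see Remark~\ref{path comp rmk}) this sum is comparable to $\lep(\sigma)=\int_\sigma\rex\,ds$, which is at most $\lambda\,\de(\Phi(x),\Phi(y))$; finally Lemma~\ref{de comp} gives $\de(\Phi(x),\Phi(y))\simeq\de(x,y)$ since $d(x,y)\ge 4+\tau\ge 2+\tau$. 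Hence $\lep(\eta)\lesssim\de(x,y)$ with the required dependence on $\lambda,\eps,\tau$.

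The hard part will be the corkscrew condition~(2) for $\eta$. Fix $z\in\eta_i$. The first step is to show $\delta_\eps(z)\simeq\delta_\eps(p_i)$; applying Lemma~\ref{lem:compare-deltaSubEps} to pass between $Y$ and $X$ reduces this to comparing the boundary-distance function at points within bounded hyperbolic distance of one another. Here I would prove, and this is the technical crux, a Harnack-type estimate: since $\delta_\eps$ is $1$-Lipschitz with respect to $\de$, while $\de(u,v)\lesssim\rey(u)$ for hyperbolically close $u,v$ and $\rey\le\eps\,\delta_\eps$ by Lemma~\ref{lem:dist-eps-bdy}, one obtains $|\delta_\eps(u)-\delta_\eps(v)|\le\eps C\,\delta_\eps(u)$ together with the symmetric inequality, and these combine to $\delta_\eps(u)\simeq\delta_\eps(v)$ with a constant depending only on $\eps$ and the distance bound. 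This comparability holds for \emph{every} $\eps>0$ and replaces the Gehring--Hayman inequality used in~\cite{BHK} for small $\eps$; in particular it frees the argument from any rough starlikeness hypothesis. With $\delta_\eps(z)\simeq\delta_\eps(p_i)$ in hand, I would bound the shorter $\eta$-arc to $z$ by the corresponding $\sigma$-arc to $p_i$ up to an additive $O(\rex(p_i))$ term (again by a sub-arc version of the discretization), invoke the corkscrew condition for $\sigma$ to get $\delta_\eps(p_i)\gtrsim\min\{\lep(\sigma(\Phi(x),p_i)),\lep(\sigma(p_i,\Phi(y)))\}$, and absorb the error via $\rex(p_i)\le\eps\,\delta_\eps(p_i)$. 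Chaining these comparisons yields $\delta_\eps(z)\gtrsim\min\{\lep(\eta(x,z)),\lep(\eta(z,y))\}$, which is condition~(2). All constants produced along the way depend only on $\lambda$, $\eps$, and $\tau$, so $Y_\eps$ is uniform with a constant of the asserted form.
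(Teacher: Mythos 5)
Your proposal is correct and follows essentially the same route as the paper's own proof: reduce to the case $d(x,y)\ge 4+\tau$ via Lemma~\ref{lem:uniformizing-close-by-points}, discretize a $\lambda$-uniform curve in $X_\eps$, pull the sample points back with $\Phi^{-1}$, join them by hyperbolic geodesics, and verify quasiconvexity and the corkscrew condition by chaining Lemmas~\ref{discrete paths}, \ref{Phi comp}, \ref{lem:dist-eps-bdy}, \ref{lem:compare-deltaSubEps} and \ref{de comp}. The ``Harnack-type estimate'' you single out as the technical crux (comparability of $\delta_\eps$ at points of bounded hyperbolic distance, derived from the $1$-Lipschitz property of $\delta_\eps$ together with $\de(u,v)\lesssim\rey(u)\le\eps\,\delta_\eps(u)$) is precisely the estimate~\eqref{eq:dist-to-bdy-closeby} that the paper establishes inside the proof of Lemma~\ref{lem:compare-deltaSubEps}, so your argument needs no ingredient beyond what the paper already provides.
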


\begin{proof}
Let $x,y\in Y$. If $d(x,y)\le 4+\tau$, then by Lemma~\ref{lem:uniformizing-close-by-points} we know that
the hyperbolic geodesic connecting $x$ to $y$ is a uniform curve in $(Y_\eps,\de)$. Therefore to verify that
$Y_\eps$ is a uniform domain, it suffices to consider only points $x,y\in Y$ with $d(x,y)>4+\tau$. For such $x,y$
we have that $d(\Phi(x),\Phi(y))\ge 4$. Let $\gamma$ be a uniform curve in $X_\eps$ with end points
$\Phi(x), \Phi(y)$. Then $\ell(\gamma)\ge 4$, and so we can apply Lemma~\ref{discrete paths} to $\gamma$.
With $a_i=\gamma(iq)$, $q=L/N$, we see that 
\[
\de(x,y)\simeq \de(\Phi(x),\Phi(y))\simeq \int_\gamma\rex\, ds.
\]
Here we have also used Lemma~\ref{de comp}.
Applying Lemma~\ref{Phi comp} with $\Phi^{-1}:X\to Y$ playing the role of $\Phi$ there, we obtain
\[
\de(x,y)\simeq \sum_{i=0}^{N-2}\rey(\Phi^{-1}(a_i))+\rey(\Phi^{-1}(\Phi(y))).
\]
As $d(y,\Phi^{-1}\circ\Phi(y))\le \tau$ and $d(x,\Phi^{-1}\circ\Phi(x))\le \tau$, we have that
\[
\de(x,y)\simeq \rey(x)+\rey(y)+\sum_{i=1}^{N-2}\rey(\Phi^{-1}(a_i)).
\]
Note that $d(a_i,a_{i+1})\le 2$, and so $d(\Phi^{-1}(a_i),\Phi^{-1}(a_{i+1}))\le 2+\tau$. 
Similarly, $d(x,\Phi^{-1}(a_1))\le 2+2\tau$, $d(y,\Phi^{-1}(a_{N-1}))\le 2+2\tau$. 
We set $\beta_0$ to be the hyperbolic geodesic with end points $\Phi^{-1}(a_1)$ and $x$,
and set $\beta_{N-1}$ to be the hyperbolic geodesic with end points $y$ and $\Phi^{-1}(a_{N-1})$.
For $i=1,\cdots, N-2$ let $\beta_i$ be 
the hyperbolic geodesic in $Y$ with end points $\Phi^{-1}(a_i)$ and $\Phi^{-1}(a_{i+1})$. By 
Lemma~\ref{lem:uniformizing-close-by-points} 
we have that
\[
\ell_\eps(\beta_i)\simeq \rey(\Phi^{-1}(a_i))d(\Phi^{-1}(a_i),\Phi^{-1}(a_{i+1}))\lesssim \rey(\Phi^{-1}(a_i)), 
\]
and so
\[
\de(x,y)\gtrsim \sum_{i=0}^{N-1}\ell_\eps(\beta_i)=\ell_\eps(\beta),
\]
where $\beta$ is the concatenation of the finitely many curves $\beta_i$, $i=0,\cdots, N-1$.
Thus $\beta$ is a quasiconvex curve connecting $x$ to $y$ in $Y$. We now show that this curve
is a uniform curve, that is, it satisfies Condition~2 of Definition~\ref{def:uniform}.

Let $z\in\beta$. If $z\in\beta_0\cup\beta_{N-1}$, then the result follows from Lemma~\ref{lem:uniformizing-close-by-points}.
Thus we may assume that $z\in\beta_i$ for some $i\in\{1,\cdots, N-2\}$. Then 
by~\eqref{eq:dist-to-bdy-closeby}, Lemma~\ref{lem:compare-deltaSubEps}, and by the
uniformity of $\gamma$, we have 
\[
\delta_\eps(z)\simeq\delta_\eps(\Phi^{-1}(a_i))\simeq\delta_\eps(a_i)
  \ge\frac{1}{\lambda}\ell_\eps(\gamma[\Phi(x),a_i]).
\]
Here we assume that $\ell_\eps(\gamma[\Phi(x),a_i])=\min\{\ell_\eps(\gamma[\Phi(x),a_i]),
\ell_\eps(\gamma[\Phi(y),a_i])\}$, since if this is not the case, we reverse the roles of
$x$ and $y$ (and sum over all $j$ from $i$ to $N$) in the following estimates.
A repeat of the arguments above also tell us that
\[
\ell_\eps(\gamma[\Phi(x),a_i])\simeq \sum_{j=0}^i\rex(a_j)\simeq\sum_{j=0}^i\rey(\Phi^{-1}(a_j))
\gtrsim \ell_\eps(\beta[x,z]).
\]
Combining the above estimates, we obtain $\delta_\eps(z)\gtrsim\ell_\eps(\beta[x,z])$.
\end{proof}

\section{On $(L,C)$-rough similarities}

A natural extension of the notion of rough isometries is the notion of rough similarities, which is a 
proper subclass of a wider class of mappings known as quasiisometries. 
A map $\Phi:Y\to X$ is said to be an $(L,C)$-rough similarity if $L>0$
and $C\ge 0$ are such that for every pair $x,y\in Y$ we have
\[
Ld_Y(x,y)-C\le d_X(\Phi(x),\Phi(y))\le Ld_Y(x,y)+C
\] 
and the Hausdorff distance between $\Phi(Y)$ and $X$ is at most $C$.
Thus a natural question to ask in the setting considered in this paper is that if both $X$ and $Y$ are
Gromov hyperbolic in the sense of Definition~\ref{def:GromovHyp}, 
$\eps>0$ is
such that $(X_\eps, d_\eps)$ is a uniform domain, and $\Phi:Y\to X$ is an $(L,C)$-rough
similarity,
then is $(Y_\eps, d_\eps)$ also a uniform domain? Our main theorem, Theorem~\ref{thm:main},
answers this in the affirmative when $L=1$. Unfortunately this theorem fails for more general $L$, as
we will see in this section.

Let $Y$ be the complex unit disk, equipped with the hyperbolic metric. Then, for $R>0$, the 
(hyperbolic) circle $C_R$ centered at $0$ with radius $R$ has (hyperbolic) length
\[
\ell_Y(C_R)=\tfrac{\pi}{2}(e^{2R}-e^{-2R}). 
\]
We now fix $\eps>2$.  
Observe from~\cite{BHK} that there is some $\eps_1>0$ such that $Y_{\eps_1}$ is a uniform domain.
Let $Z$ be the same complex unit disk, but equipped with the scaled hyperbolic metric given by 
$d_Z(z,w)=\tfrac{\eps_1}{\eps} d(z,w)$. 
We know that $Z_\eps=Y_{\eps_1}$, and so $Z_\eps$ is a uniform domain. Clearly $Z$ is 
$(\tfrac{\eps_1}{\eps},0)$-roughly similar to $Y$.
We now
show that $Y_\eps$ is \emph{not} a uniform domain. For $R\gg1$ let 
\[
z_R=\left(\tfrac{e^{2R}-1}{e^{2R}+1}, 0\right) \text{ and }
w_R=\left(-\tfrac{e^{2R}-1}{e^{2R}+1}, 0\right).
\]
The length of the hyperbolic geodesic ray 
$[z_R,(1,0))\subset\R$
from $z_R$ to $\infty$ has length $\tfrac{1}{\eps} e^{-\eps R}$,
and, as $\eps>2$, we have that $\partial Y_\eps$ has only one point.
Therefore,
\[
d_\eps(z_R,w_R)\le \tfrac{2}{\eps}e^{-\eps R}. 
\]
Suppose that $Y_\eps$ is a uniform domain with
a uniformity constant $A$. Then let $\gamma$ be an $A$-uniform curve with end points $z_R$ and $w_R$. 
Since $\gamma$ is a compact curve, it stays within a closed hyperbolic disk centered at $0$. Let $T$ be the
smallest radius of such a hyperbolic disk; then there is some point $z_0$ in $\gamma$ with $d_Y(z_0,0)=T$.
By symmetry, we may assume that $\ell_\eps(\gamma_{z_R,z_0})\le \ell_\eps(\gamma_{w_R,z_0})$. Then
\[
\ell_\eps(\gamma_{z_R,z_0})\ge \int_R^T e^{-\eps t}\, dt=\frac{e^{-\eps R}-e^{-\eps T}}{\eps}.
\] 
Thus the second condition of uniformity of $\gamma$ from Definition~\ref{def:uniform} implies that
\[
T\le \frac{\log(A+1)}{\eps}+R.
\]
Let $\tau\ge 0$ be the largest number such that $\gamma$ lies in the complement of the hyperbolic
ball centered at $0$ with radius $\tau$ (with that ball being empty if $\tau=0$). As before, it then follows that
\[
\ell_\eps(\gamma)\ge \frac{e^{-\eps \tau}-e^{-\eps R}}{\eps},
\]
which, when combined together with the first condition of uniformity of $\gamma$ from Definition~\ref{def:uniform},
tells us that
\[
 \frac{e^{-\eps \tau}-e^{-\eps R}}{\eps}\le A d_\eps(z_R,w_R)\le \frac{2A}{\eps} e^{-\eps R}.
\]
From this we also conclude that necessarily
\[
\tau\ge R-\frac{\log(2A+1)}{\eps}.
\]
Thus the curve $\gamma$ is trapped in the annulus, centered at $0$, with (hyperbolic) inner radius 
$R-\tfrac{\log(2A+1)}{\eps}$ and (hyperbolic) outer radius $R+\tfrac{\log(A+1)}{\eps}$.
It then follows that (with $\gamma$ parametrized with respect to the arc length measured from
the point of view of the hyperbolic metric)
\[
\ell_\eps(\gamma)=\int_\gamma e^{-\eps d_Y(\gamma(t),0)}\, ds(t)\ge e^{-\eps [R+\log(A+1)/\eps]} \ell_Y(\gamma)
 \ge (A+1)e^{-\eps R} \, \frac{\pi}{8} e^{2[R-\log(2A+1)/\eps]},
\]
where we used the fact that $R\gg 1$. Now applying the first condition of uniformity of $\gamma$ again tells 
us that with $C_A=(A+1)\tfrac{\pi}{8} (2A+1)^{-2/\eps}$,
\[
C_A e^{-(\eps -2)R}\le A d_\eps(z_R,w_R)\le \frac{2A}{\eps} e^{-\eps R}.
\]
As this is not possible for large $R$, it follows that $Y_\eps$ is not uniform with uniformity constant $A$. Since
the constant $A$ above is arbitrary, it follows that $Y_\eps$ is \emph{not} a uniform space.

The above computation also indicates that if $Y$ is a Gromov hyperbolic space, $C>0$ and $y_0\in Y$ such that
for $R\gg H\gg 1$ there are points $z_R,w_R\in Y$ with $d(z_R,y_0)=d(w_R,y_0)$ so that
whenever $\gamma\subset B(y_0,R+H)\setminus B(y_0,R-H)$ has $z_R$ and $w_R$ as its end points we must
have $\ell(\gamma)\ge c_H e^{C R}$, then $Y_\eps$ cannot be a uniform space when $\eps >C$. Should
$Y$ be a smooth hyperbolic 
manifold, the smallest choice of such $C$ is associated with the curvature of $Y$; however,
as we are merely concerned with the large scale behavior here, this association is not straight-forward. In the
setting of (non-smooth) metric spaces a similar limitation on $\eps$ in terms of some notion of curvature might
be possible. Indeed, from the work of~\cite{BHK} we know that for each $\delta>0$
there is some $\eps_0(\delta)>0$ such that 
if $Y$ is Gromov hyperbolic with hyperbolicity constant
$\delta$ as in Definition~\ref{def:GromovHyp}, 
then $Y_\eps$ is a uniform domain whenever $\eps\le \eps_0(\delta)$. The estimates for $\eps_0(\delta)$ are
not explicit in~\cite{BHK}, and may perhaps not even be optimal. However, the results of~\cite{BHK} indicate a link between 
some notion of
curvature of $Y$ and the supremum of all $\eps>0$ for which $Y_\eps$ is a uniform domain.
As this link is not well understood so far, and the estimates for the limitation $\eps_0(\delta)$ in~\cite{BHK}
are not known to be optimal, Theorem~\ref{thm:main} gives a way of verifying uniformity of $Y_\eps$ without 
linking it to $\delta$ but instead comparing it to all other locally compact geodesic metric spaces that are
roughly isometric to $Y$.

\vskip .2cm

\noindent Address: University of Cincinnati, Department of Mathematical Sciences, P.O. Box 210025, 
Cincinnati, OH 45221-0025, U.S.A.

\noindent E-mail: 

J.L.: {\tt lindqujy@ucmail.uc.edu}

N.S.: {\tt shanmun@uc.edu}

\end{document}